\title{Dodgson's Rule\\
Approximations and Absurdity}
\author{John C. M\textsuperscript{c}Cabe-Dansted\footnotemark[1]
}
\newcommand{\noun}[1]{\textsc{#1}}
\providecommand{\tabularnewline}{\\}
  \newtheorem{thm}{Theorem}[section]
  \newtheorem{defn}[thm]{Definition}
  \newtheorem{lem}[thm]{Lemma}
  \newtheorem{cor}[thm]{Corollary}
  \newtheorem{note}[thm]{Note}
  \newtheorem{example}[thm]{Example}
\providecommand\ifverbose[1]{#1}
\newtheorem{tbl}{Table}
\newcommand{\BeginTable}{\begin{tbl}}
\newcommand{\EndTable}{\end{tbl}}
 \newcommand{\funcc}[1]{\text{#1}}
\newcommand\adv{\funcc{adv}}
\def\var{\func{var}}
\def\cov{\func{cov}}
\newcommand\sups[1]{\textsuperscript{#1}}
\def\cal{\mathcal}
\newcommand{\LL}{\mbox{$\cal L$}}
\newcommand{\NN}{\mbox{$\cal N$}}
\renewcommand{\underbar}[1]{\textbf{#1}\index{#1}}
\def\tth{\textsuperscript{th}\hspace{0.2ex}}
\newcommand{\iflong}[1]{#1}
\newcommand{\ifshort}[1]{}
\newcommand{\ifpolya}[1]{#1}
\newcommand{\ifnotpolya}[1]{}
\begin{document}
\newcommand{\rulescore}[1]{\textrm{Sc}_{\mathbf{#1}}}
\newcommand{\ScS}{\rulescore{S}}
\newcommand{\ScT}{\rulescore{T}}
\newcommand{\ScQ}{\rulescore{Q}}
\newcommand{\ScC}{\rulescore{C}}
\newcommand{\ScR}{\rulescore{R}}
\newcommand{\ScRR}{\rulescore{\&}}
\newcommand{\ScD}{\rulescore{D}}
\newcommand{\eqgap}{\;\,\,}
\newcommand{\jnicefrac}[2]{\nicefrac{#1}{#2}}
\newcommand{\jlog}{\ln}
\newcommand{\outcomes}{\mathcal{U}}
\newcommand{\events}{\mathcal{F}}
\newcommand{\ceil}[1]{\left\lceil {#1} \right\rceil }
\newcommand{\func}[1]{\text{#1}}
\newcommand{\floor}[1]{\left\lfloor #1\right\rfloor }
\newcommand{\jseq}[2]{#1_{1},#1_{2},\ldots,#1_{#2}}
\newcommand{\term}[1]{\textbf{#1}\index{#1}}
\newcommand{\probmeas}{m}
\newcommand{\probspace}[1]{(\outcomes_{#1},\probmeas_{#1})}
\newcommand{\even}{2\mathbb{Z}}
\newcommand{\integers}{\mathbb{Z}}
\newcommand{\union}{\cup}
\newcommand{\allorders}{\LL(\allalts)}
\newcommand{\allalts}{\mathcal{A}}
\newcommand{\altalts}{\mathcal{C}}
\newcommand{\reals}{\mathbb{R}}
\newcommand{\vctr}[1]{\mathbf{#1}}
\newcommand{\profile}{\mathcal{P}}
\newcommand{\altprofile}{\mathcal{Q}}
\newcommand{\fQ}{F}

\newcommand{\linord}{\mathbf{v}}
\newcommand{\ith}{i^{\textrm{th}}}
\newcommand{\TF}{W}
\newcommand{\MF}{W^{\profile}}
\newcommand{\bigO}{\mathcal{O}}

\newcommand{\jE}[1]{E[#1]}
\newcommand{\jcov}[2]{\cov(#1,#2)}
\newcommand{\jvar}[1]{\var(#1)}
\newcommand{\allsubords}[1]{Z(#1)}
\newcommand{\allmultisets}[1]{M(#1)}
\newcommand{\votsit}{U}
\newcommand{\votswapped}{\tilde{\profileswapped}}
\newcommand{\profileswapped}{V}
\newcommand{\swapset}{\setofswaps}
\newcommand{\altswapset}{\mathfrak{T}}
\newcommand{\alinord}{\vctr{v}}
\newcommand{\DS}{Sc_{\textbf{d}}}
\newcommand{\numswaps}{\Sigma\setofswaps}
\newcommand{\altlinord}{\vctr{w}}
\newcommand{\jzseq}[2]{#1_{0},#1_{1},\ldots,#1_{#2}}
\newcommand{\abv}[2]{\mathfrak{A}_{#1}\left(#2\right)}
\newcommand{\belo}[2]{\mathfrak{B}_{#1}\left(#2\right)}
 \newcommand{\klength}{|\vctr{c}|}
 \newcommand{\kstring}{c_{1}\cdots c_{\klength}}
\newcommand{\kkstring}{c_{1}c_{2}\cdots c_{\klength-1}}
 \newcommand{\alength}{|\vctr{a}|}
\newcommand{\astring}{a_{1}\cdots a_{\alength}}
 \newcommand{\aastring}{a_{1}\cdots a_{\alength-1}}
\newcommand{\jU}{T_{V}}
\newcommand{\jUmax}{T_{U}}
\newcommand{\setofswaps}{\mathfrak{S}}
\newcommand{\jS}{S_{\profile}}
\newcommand{\jT}{S_{\profileswapped}}
\newcommand{\LPmat}{A}
\newcommand{\LPbch}{b}
\newcommand{\LPfch}{c}
\newcommand{\LPb}{\vctr{\LPbch}}
\newcommand{\LPf}{\vctr{f}}
\newcommand{\LPm}{M}
\newcommand{\LPn}{N}
\newcommand{\LPbb}{(\jseq{\LPbch}{\LPm})^{T}}
\newcommand{\LPff}{(\jseq{\LPfch}{\LPm})^{T}}
\newcommand{\LPwhole}{(\LPm,\LPn,\LPmat,\LPb,\LPf)}
\newcommand{\vone}[1]{\vctr{1}_{#1}}
\newcommand{\rankdist}[2]{\func{dist}(#1,#2)}
\newcommand{\rankdistab}[4]{\func{dist}_{#3,#4}(#1,#2)}
\newcommand{\casesof}[2]{#1^{#2}}
\newcommand{\posreals}{\reals^{+}}

\newcommand{\intrange}[3]{\{#1,#2,\ldots,#3\}}
\newcommand{\intrangeone}[1]{\intrange{1}{2}{#1}}
\newcommand{\nsit}{\tilde{n}}
\newcommand{\dodgsonalt}{a}
\newcommand{\dodgalt}{a}
\newcommand{\dqalt}{d}
\newcommand{\allords}{\mathcal{L}\left(\mathcal{A}\right)}
\newcommand{\antiordv}{\tilde{\alinord}}
\newcommand{\votset}{\mathcal{S}}
\newcommand{\votcount}[2]{\#_{#1}(#2)}
\newcommand{\SSS}{\mbox{${\cal S}$}}
\newcommand{\altvotsit}{V}

\begin{abstract}
With the Dodgson rule, cloning the electorate can change the winner,
which \citet{Yo77} considers an {}``absurdity''. Removing
this absurdity results in a new rule \citep{fish77} for which we can compute
the winner in polynomial time \citep{rospvo03}, unlike the traditional Dodgson rule.
We call this rule DC and introduce two new related rules (DR and D\&). Dodgson did not explicitly
propose the {}``Dodgson rule'' \citep{tideman1}; we argue that
DC and DR are better realizations of the principle behind the Dodgson
rule than the traditional Dodgson rule. These rules, especially D\&,
are also effective approximations to the traditional Dodgson's rule.
We show that, unlike the rules we have considered previously,
the DC, DR and D\& scores differ from the Dodgson score by no more than a fixed amount given a fixed number of alternatives, and thus these new rules converge to Dodgson under any reasonable assumption on voter behaviour, including the Impartial Anonymous Culture assumption.
{\normalsize }{\normalsize \par}
\end{abstract}

\section{Introduction}

\footnotetext[1]{The author would like to thank Arkadii Slinko for his many valuable suggestions and references.}
 Finding the Dodgson winner to an election can be very difficult,
\citet{batotr89} proved that determining whether an alternative is
the Dodgson winner is an NP-hard problem. Later \citet{hehero97}
refined this result by showing that the Dodgson winner problem was
complete for parallel access to NP, and hence not in NP unless the polynomial hierarchy collapses\@. This result was of interest
to computer science as the previously known problems in this complexity
class were obscure by comparison.

For real world elections we do not want intractable problems. \citet{tideman1}
proposed a simple rule to approximate the Dodgson rule. The impartial
culture assumption states that all votes are independent and equally
likely. Under this assumption, it has been proven that the probability
that Tideman's rule picks the Dodgson winner converges to one as the
number of voters goes to infinity \citep{DaPrSl07}. Our paper also
showed that this growth was not exponentially fast. Two rules
have been independently proposed for which this convergence is exponentially
fast, our Dodgson Quick (DQ) rule and the \texttt{GreedyWinner} algorithm
proposed by \citet{homan-2005-}. It is usually easy to verify that
the DQ winner and GreedyWinner are the same as the Dodgson winner, a property
that \citeauthor{homan-2005-} formalise as being {}``frequently
self-knowingly correct''. However the proofs of convergence 
depended heavily on the unrealistic impartial culture assumption.
We shall show that under the Impartial Anonymous Culture (IAC) these rules do not converge. 

The importance of ensuring that statistical results hold on reasonable
assumptions on voter behaviour is considered by \citet{PrRo07}. They
define {}``deterministic heuristic polynomial time algorithm'' in
terms of both the problem to be solved and a probability distribution
over inputs. However they do not consider the issue of whether a heuristic
is self-knowingly correct. Thus we extend the concept of an algorithm
being {}``frequently self-knowingly correct'' to allow particular
probability distributions to be specified. \citet{PrRo07} also propose
{}``Junta'' distributions; these distributions are intended to produce
problems that are harder than would be produced under reasonable assumptions
on voter behaviour. Hence if it is easy to solve a problem when input
is generated according to a Junta distribution it is safe to assume
that it will be easy under any reasonable assumption of voter behaviour.
We will not use Junta distributions, but instead simply use that fact
that even {}``neck-and-neck'' national elections are won by thousands
of votes. We will also show that the rules we have considered
previously, Tideman, Dodgson Quick etc., do not converge to Dodgson's
rule under \iflong{the} IAC.

However, the reason that the Dodgson rule is so hard to compute is
because cloning the electorate can change the winner. That is, if we
replace each vote with two (or more) identical votes, this may
change the winner. When discussing majority voting \citet{Yo77} described
this property as an {}``absurdity''. Young suggested that such absurdities
be fixed in majority voting by allowing fractions of a vote to be
deleted. \citet{fish77} proposed a similar modification to the traditional Dodgson
rule, which we call Dodgson Clone. The Dodgson Clone scores can be
computed by relaxing the integer constraints on the Integer Linear
Program that \citet{batotr89} proposed to calculate the Dodgson score;
normal (rational) Linear Programs can be solved in polynomial
time, we can compute the Dodgson Clone score in polynomial time \citep{rospvo03}.

The Dodgson Clone rule is also an effective approximation to the Dodgson
rule.  In computer science, an approximation typically refers to an algorithm that selects a value that is always accurate to within some error. This form of approximation is not meaningful when selecting a winner, although these rules can be used to approximate the frequency that Dodgson winner has some property. For example, \citet{shah1} used Tideman's rule to approximate the frequency that the Dodgson winner matched the winners according to other rules, and so such Tideman, DQ etc.\ can be considered approximations of Dodgson's rule in a loose sense. However we can approximate the Dodgson score. We will show that for a fixed number of alternatives, the Dodgson Clone score approximates the Dodgson score to within a constant error.
 As it is implausible that the margin by which the winner wins the election will not grow with the size of the electorate, the Dodgson winner will converge to the Dodgson Clone winner under any reasonable assumption of voter behaviour. In particular we will show that they will converge under 
the Impartial Anonymous Culture assumption.

We propose two closer approximations Dodgson Relaxed (DR) and Dodgson
Relaxed and Rounded (D\&). These approximations, like the traditional
Dodgson rule, are not resistant to cloning the electorate. This allows
them to be closer to the Dodgson rule than Dodgson Clone, both rules
converge to the Dodgson rule exponentially quickly under the Impartial
Culture assumption. The DR rule is superior to the Dodgson rule in
the sense that it can split ties in favour of alternatives that are
fractionally better. The D\& scores are rounded up, so the D\& rule
does not have this advantage. However it is exceptionally close
to Dodgson. In 43 million elections randomly generated
according to various assumptions on voter behaviour, the D\& winner
differed from the Dodgson winner in only one election.

The approximation proposed by \citet{RePEc:huj:dispap:dp466} is similar
to these approximations in the sense that it involves a relaxation
of the integer constraints. However their approximation is randomised,
and thus quite different from our deterministic approximations. Using
a randomised approximation as a voting rule would be unusual, and
they do not discuss the merits of such a rule. Thus the focus of their
paper is quite different, as we present rules that we argue are superior
to the traditional formalisation of the Dodgson rule. 
  Additionally, they do not discuss the issue of frequently self-knowing correctness.

Another approach to computing the Dodgson score has been to limit
some parameter. \citet{batotr89} showed that computing the Dodgson
scores and winner is polynomial when either the number of voters or
alternatives is limited. It was shown that computing these from a
voting situation is logarithmic with respect to the number of voters
when the number of alternatives is fixed \citep{Da06}, and hence
Dodgson winner is Fixed Parameter Tractable (FPT) with number of alternatives
as the fixed parameter. It is now also known that the Dodgson winner
is FPT when the Dodgson score is taken as the fixed parameter \citep{BeGuNi08}.

Thus we will define the Dodgson based rules in terms of Condorcet-tie
winners, rather than Condorcet winners. As we will discuss briefly,
this does not affect convergence.

\section{\label{sub:Advantages-based-Scores}Preliminaries}

In our results we use the term agent in place of voter and alternative
in place of candidate, as not all elections are humans voting other
humans into office. For example, in direct democracy, the citizens
vote for laws rather than candidates.

\ifverbose{We assume that agents' preferences are transitive, i.e.\ if
they prefer $a$ to $b$ and prefer $b$ to $c$ they also prefer
$a$ to $c$. We also assume that agents' preferences are strict,
if $a$ and $b$ are distinct they either prefer $a$ to $b$ or $b$
to $a$. Thus we may consider each agent's preferences to be a ranking
of each alternative from best to worst.}

Let $\allalts$ and $\NN$ be two finite sets of cardinality $m$
and $n$ respectively. The elements of $\allalts$ will be called
alternatives, the elements of $\NN$ agents. We represent a \underbar{vote}
by a linear order of the $m$ alternatives. We define a \underbar{profile}
to be an array of $n$ votes, one for each agent. Let $\profile=(P_{1},P_{2},\ldots,P_{n})$
be our profile. If a linear order $P_{i}\in\LL(A)$ represents the
preferences of the $i$\tth agent, then by $aP_{i}b$, where $a,b\in\allalts$,
we denote that this agent prefers $a$ to $b$.

A multi-set of linear orders of $\allalts$ is called a \underbar{\label{voting-situation}voting
situation}. A voting situation specifies  which linear orders were
submitted and how many times they were submitted but not  who submitted
them. \iflong{A voting situation is sometimes referred to as a {}``succinct''
election, as it can be represented succinctly when there only a few
alternatives \citep{FaHeHe06}.} 

The Impartial Culture (IC) assumption is that each profile is equally likely. The Impartial Anonymous Culture (IAC) assumption is that each voting situation is equally likely. To understand the difference, consider a two alternative election with billions of agents; under IC it is almost certain that each alternative will get 50\% ($\pm 0.5\%$) of the vote; under IAC, 50.0\% is no more likely than any other value.

Let $\profile=(P_{1},P_{2},\ldots,P_{n})$ be our profile. We define
$n_{xy}$ to be the number of linear orders in $\profile$ that rank
$x$ above $y$, i.e. $n_{xy}\equiv\#\{i\mid xP_{i}y\}$. 

\begin{defn}
The \underbar{advantage} of $a$ over $b$ is defined as follows:\[
\text{adv}(a,b)=\max(0,n_{ab}-n_{ba})\]
A \underbar{Condorcet winner} is an alternative $a$ for which $\adv(a,b)>0$
for all other alternatives $b$.  We define a \underbar{Condorcet-tie winner}, to be an alternative
$a$ such $\adv(b,a)=0$ for all other alternatives $a$. A Condorcet winner or Condorcet-tie winner does not always
exist.
\end{defn}
It is traditional to define the Dodgson score of an alternative as
the terms of the minimum number of swaps of neighbouring alternatives
required to make that alternative \emph{defeat} all others in pairwise
elections, i.e. make the alternative a Condorcet winner. When not requiring solutions to be integer this becomes
undefined, as if we defeat an alternative by $\epsilon>0$ then there
exists a better solution where we defeat the alternative by only $\nicefrac{\epsilon}{2}$.

For this reason, when defining the Dodgson scores we only require
that the alternative defeat \emph{or tie} other alternatives, i.e. make the alternative a Condorcet-tie winner. For
better consistency with the more traditional Dodgson rule we could
define the Condorcet winner as an alternative $a$ for which $\adv(a,b)\geq1$.
However this would mean that the Dodgson Clone rule would not be resistant
to cloning of the electorate.

This difference in definition does not affect convergence. Our proof
of convergence relies only on fact that Dodgson, D\&, DR and DC scores
differ by at most a fixed amount $(\bigO(m!))$ when the number of
alternatives is fixed. To convert a Condorcet-tie winner $c$ into
a Condorcet winner $c$ we need to swap $c$ over at most $(m-1)$
alternatives, each requiring at most $(m-1)$ swaps of neighbouring
alternatives. Hence the difference between the score according to
these different definitions of Dodgson is at most $(m-1)^{2}$. 

We will now define a number of rules in terms of scores. The winner
of each rule below is the alternative with the lowest score.

The \textbf{Dodgson score}\index{C3 rules!Dodgson's rule}\index{Dodgson's rule}
(\citealt{dodgson}, see e.g. \citealt{black,tideman1}), which we
denote as $\ScD(a)$, of an alternative $a$ is %
defined
as the minimum number of swaps of neighboring alternatives required
 to make $a$ a Condorcet-tie winner.  We call the alternative(s)
with the lowest Dodgson score(s) the \textbf{Dodgson winner}(s). \citep{batotr89} 

\index{C2 rules!Simpson's rule}\index{Simpson's rule}

The \textbf{Tideman score} $\ScT(a)$ of an alternative $a$ is:
\begin{align*}
\ScQ(a)  = & \sum_{b\ne a}\text{adv}(b,a).\end{align*}
The \textbf{Dodgson Quick (DQ) score} $\ScQ(a)$, of an alternative
$a$ is \begin{align*}
\ScQ(a)  = & \sum_{b\ne a}\fQ(b,a),\text{ where }\fQ(b,a)=\left\lceil \frac{\text{adv}(b,a)}{2}\right\rceil .\end{align*}

Although the definitions of $\ScQ$ and $\ScT$ are very similar,
the Dodgson Quick rule converges exponentially fast to Dodgson's rule
under the Impartial Culture assumption, where as Tideman's rule does
not \citep{DaPrSl07}. This is because Dodgson and DQ are more sensitive
to a large number of alternatives defeating $a$ by a small odd margin (e.g. 1)
than Tideman is.

We define the $k$-Dodgson score $\ScD^{k}(d)$ of $a$ as being the
Dodgson score of $a$ in a profile where each agent has been replaced
with $k$ clones, divided by $k$. That is, where $\profile$ is our
fixed profile, $\profile^{k}$ is the profile with each agent replaced
with $k$ clones, and $\ScD[\profile](a)$ is the Dodgson score of
$a$ in the profile $\profile$, then \begin{align*}
\ScD^{k}(a) & =\ScD^{k}[\profile](a)=\frac{\ScD[\profile^{k}](a)}{k}\end{align*}

We define the \textbf{Dodgson Clone (DC) score} $\ScC(a)$ of an alternative
$a$ as $\min_{k}\ScD^{k}(a)$. The DC score can be equivalently defined
by modifying the Dodgson rule to allow votes to be split into rational
fractions and allowing swaps to be made on those fractions of a vote.
Note that like the Tideman approximation, the DC score is less sensitive
than Dodgson to a large number of alternatives defeating $a$ by a
margin of $1$, so the DC score is unlikely to converge to Dodgson
as quickly as DQ under the Impartial Culture assumption.

We define the \textbf{Dodgson Relaxed (DR) score} $\ScR$ as with
the Dodgson score, but allow votes to be split into rational fractions.
However we require that $a$ be swapped over $b$ at least $F(b,a)$
times. Thus $\ScR(d)\geq\ScQ(d)$ and the Dodgson Relaxed rule will
converge at least as quickly as DQ. The DR rules thus sacrifices independence
to cloning of the electorate to be closer to the Dodgson rule than
DC. 

The \textbf{Dodgson Relaxed and Rounded (D\&) score} $\rulescore{\&}$
is the DR score rounded up, i.e. $\rulescore{\&}(d)=\ceil{\ScR(d)}$.

\providecommand{\tth}{\sups{th}}

\section{\noun{Dodgson Linear Programmes}}
\ifshort{The Dodgson Clone scores can be computed by relaxing the integer constraints on the Integer Linear Programme (ILP) for Dodgson's rule \citep{rospvo03}.  In this section we will show that the difference between the solutions of the ILP and LP are $\bigO(m!)$. 
}
\iflong{
\citet{batotr89} defined an ILP for determining the Dodgson score
of a candidate as follows:\\
$\min\sum_{ij}jx_{ij}$ subject to\\
$\sum_{j}x_{ij}=N_{i}$ (for each type of vote $i$)\\
$\sum_{ij}e_{ijk}x_{ij}\geq d_{k}$ (for each alternative $k$)\\
$x_{ij}\geq0$, and each $x_{ij}$ must be integer,
where $x_{ij}$ are the variables and the candidate $d$ is swapped
up $j$ positions $i$ times in votes of type $i$. The constant $N_{i}$
represents the number of votes of type $i$. The constant $e_{ijk}$
is $1$ if moving $d$ up $j$ positions in vote type $i$ raises
$d$ above $k$, and 0 otherwise. The $d_{k}$ is the {}``minimum
number of votes'' that $d$ must gain to defeat $k$. This definition
opens a potential ambiguity: is $d_{k}=\adv(k,d)/2$ or is $d_{k}=F(k,d)=\ceil{\adv(k,d)/2}$.
This ambiguity does not affect the ILP as both definitions are equivalent
when we require that the variables be integer. However when we relax
the requirement that the variables be integer, we get the DC score
if $d_{k}=\adv(k,d)/2$ and the DR score if $d_{k}=F(k,d)$.
}
\citet{batotr89} note that there are only $m!$ orderings of the
alternatives and thus no more than $m!$ vote types. However, since
we never swap $d$ down the profile \citep{Da06}, the ordering of the candidates
below $d$ are irrelevant. We will formalise this notion as $d$-equivalence:

\begin{defn}
Where $\linord$ is a linear order on $m$ alternatives, let $\linord_{i}$
represent the $i$\tth highest ranked alternative and $\linord_{\leq i}$
represent the sequence of $i$\tth highest ranked alternative\emph{s}.
Where $d$ is an alternative, we say $\linord$ and $\altlinord$
are $d$-equivalent ($\linord\equiv_{d}\altlinord$) iff there exists
$i$ such that $\linord_{i}=d$ and $\linord_{\leq i}=\altlinord_{\leq i}$.
\end{defn}
\begin{lem}
Let $S_{d}$ be the set of $d$-equivalence classes. Then $\left|S_{d}\right|$
is less than $\left(m-1\right)!e$ where $e=2.71\ldots$ is the exponential
constant. 
\end{lem}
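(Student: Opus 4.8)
The plan is to set up a bijection between $d$-equivalence classes and the ordered sequences of alternatives sitting at $d$ or above, and then to count those sequences directly. First I would note that, by the definition of $\equiv_{d}$, a class is pinned down entirely by the position $i$ of $d$ together with the top sequence $\linord_{\leq i}$; since $\linord_{\leq i}$ already records its own length $i$ and ends in $d$, the class is determined by this sequence alone, and distinct sequences give distinct classes. The ranking of the alternatives strictly below $d$ is irrelevant, which is exactly the reason for introducing $d$-equivalence. Thus $|S_{d}|$ equals the number of sequences $\linord_{\leq i}$ with $\linord_{i}=d$.

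To count these, I would condition on the position $i$ of $d$, which ranges over $1,\dots,m$. For fixed $i$ the final entry is forced to be $d$, while the preceding $i-1$ entries form an ordered selection of distinct alternatives from the $m-1$ alternatives other than $d$; there are $(m-1)!/(m-i)!$ such selections. Summing over $i$ yields
\[
|S_{d}| = \sum_{i=1}^{m} \frac{(m-1)!}{(m-i)!}.
\]

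The final step is to reindex with $j=m-i$, rewriting the right-hand side as $(m-1)!\sum_{j=0}^{m-1} 1/j!$, and to recognise the inner sum as a partial sum of the series $\sum_{j=0}^{\infty}1/j! = e$. Because the omitted tail consists of strictly positive terms, this partial sum is strictly less than $e$, giving $|S_{d}| < (m-1)!\,e$ as required.

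Since the argument is a direct enumeration, I do not expect a serious obstacle; the only delicate point is the correspondence established in the first paragraph. The care there lies in checking that agreement of the top-$i$ sequences is both necessary and sufficient for $d$-equivalence, so that the enumeration neither conflates two genuinely different classes nor splits a single class across several sequences. Once this correspondence is secure, the remaining computation is routine.
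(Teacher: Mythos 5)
Your argument is correct and is essentially the paper's own proof: both condition on the position $i$ of $d$, count $(m-1)!/(m-i)!$ classes for each $i$, sum to get $(m-1)!\sum_{j=0}^{m-1}1/j!$, and bound this by the full series for $e$. The extra care you take in verifying the bijection between classes and top sequences is a sound (if implicit in the paper) preliminary step, but the enumeration itself is identical.
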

\begin{proof}
We see that there is one equivalence class where $d$ is ranked in
the top position, $m-1$ equivalence classes where $d$ is ranked
in the second highest position, and in general $\prod_{k=m-i+1}^{m-1}k$
when $d$ is ranked $i\tth$ from the top. We note that:
\begin{align*}
\prod_{k=m-i+1}^{m-1}k  = & \frac{(m-1)!}{(m-i)!}\end{align*}
We see that \begin{align*}
\left|S_{d}\right|  = & \frac{(m-1)!}{(m-m)!}+\frac{(m-1)!}{(m-(m-1))!}+\cdots+\frac{(m-1)!}{(m-1)!}\\
\iflong{  = & \left(m-1\right)!\left(\frac{1}{0!}+\frac{1}{1!}+\cdots+\frac{1}{(m-1)!}\right)\\}
  < & \left(m-1\right)!\left(\frac{1}{0!}+\frac{1}{1!}+\cdots\right)
  =  \left(m-1\right)!e\tag*{\qedhere}\end{align*}
\end{proof}
\begin{cor}
If we categorise votes into type based on $d$-equivalence classes (instead of linear orders),
the ILP %
below
 has less than $m\left(m-1\right)!e=m!e$
variables.
\end{cor}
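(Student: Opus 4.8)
The corollary follows almost immediately from the preceding lemma by a counting argument, so my plan is to make the bookkeeping explicit. The key observation is that the ILP of \citet{batotr89} has one group of variables $x_{ij}$ for each vote type $i$, where the second index $j$ ranges over the number of positions the alternative $d$ is moved up. Once we re-categorise votes by $d$-equivalence class rather than by full linear order, the number of vote types drops from at most $m!$ to $\left|S_d\right| < (m-1)!e$ by the Lemma, since only the identity of $d$'s position and the set of alternatives above $d$ matter for the score and the constraints.

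First I would argue that re-categorising by $d$-equivalence is legitimate, i.e.\ that two $d$-equivalent orders contribute identically to the ILP. This is exactly the content remarked before the Definition: because we never swap $d$ downward \citep{Da06}, the relative order of the alternatives ranked below $d$ is irrelevant to both the objective $\sum_{ij} j x_{ij}$ and the constraint coefficients $e_{ijk}$ (moving $d$ up a fixed number of positions raises it over the same set of alternatives regardless of how those below are arranged). Hence all orders in one $d$-equivalence class can share a single family of variables.

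Next I would count the variables. For each of the fewer than $(m-1)!e$ vote types, the index $j$ records how far $d$ is moved up, and $d$ can be raised over at most $m-1$ alternatives, so $j$ ranges over at most $m$ values (including the no-move case $j=0$). Multiplying, the total number of variables is strictly less than $m \cdot (m-1)! \, e = m!\,e$, which is the claimed bound. The only subtlety worth stating is why $j$ needs at most $m$ values rather than something larger: since there are $m$ alternatives, $d$ occupies one of $m$ positions and can be promoted to the top in at most $m-1$ swaps past distinct alternatives, giving at most $m$ distinct target positions.

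I do not expect a genuine obstacle here, as the result is a direct corollary of the Lemma combined with the structure of the ILP. The main thing to get right is the justification that $d$-equivalence preserves the ILP data, so that collapsing classes does not change the optimum; once that is granted, the variable count is just the product of the number of vote types and the at-most-$m$ possible promotion distances.
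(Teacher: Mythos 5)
Your proposal is correct and takes essentially the same route as the paper, which simply notes that there are fewer than $(m-1)!e$ choices of vote type $i$ (by the preceding lemma) and at most $m$ choices of $j$, giving fewer than $m(m-1)!e = m!e$ variables. Your additional justification that $d$-equivalent orders contribute identically to the ILP (because $d$ is never swapped downward) matches the remark the paper makes just before introducing $d$-equivalence, so nothing is missing or different in substance.
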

Note that there are less than $(m-1)!e$ choices for $i$, no more
than $m$ choices for $j$ and thus less than $m\left(m-1\right)!e=m!e$
variables (each of the form $y_{ij}$).

\begin{lem}
We can transform the ILP \ifshort{of \citet{batotr89}}
 into the following form \ifshort{\citep{Da08Cfull,Da06}}\iflong{\citep{Da06}}:\\
$\min\sum_{i}\sum_{j>0}y_{ij}$ subject to\\
$y_{i0}=N_{i}$ (for each type of vote $i$)\\
$\sum_{ij}(e_{ijk}-e_{i(j-1)k})y_{ij}\geq D_k$ (for each alternative
$k$)\\
$y_{ij}\leq y_{i(j-1)}$ (for each $i$ and $j>0$)\\
$y_{ij}\geq0$, and each $y_{ij}$ must be integer.
\end{lem}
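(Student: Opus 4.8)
The plan is to prove equivalence by exhibiting an explicit change of variables between the two integer programmes and checking that it is a bijection on feasible solutions that preserves the objective. First I would set $y_{ij}=\sum_{j'\geq j}x_{ij'}$, so that $y_{ij}$ counts the votes of type $i$ in which $d$ is moved up \emph{at least} $j$ positions; the inverse map is then $x_{ij}=y_{ij}-y_{i(j+1)}$. Because at most $m-1$ swaps are ever required, $y_{ij}=0$ for all sufficiently large $j$, so both maps are well defined and the telescoping sums used below terminate.

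Next I would verify that feasibility is preserved in both directions. Summing the inverse relation over $j$ gives $y_{i0}=\sum_{j'}x_{ij'}=N_i$, which is exactly the first new constraint. Non-negativity and integrality of $x_{ij}=y_{ij}-y_{i(j+1)}$ translate precisely into the monotonicity constraints $y_{ij}\leq y_{i(j-1)}$ together with integrality of the $y_{ij}$; conversely, any non-negative integer family that is non-increasing in $j$ and satisfies $y_{i0}=N_i$ yields a feasible $x$. Hence the two feasible regions are in bijection, and it remains to match objective and defeat constraints.

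Then I would rewrite both of these by interchanging the order of summation. For the objective, each $x_{ij'}$ contributes to $y_{ij}$ for exactly $j=1,\ldots,j'$, so $\sum_i\sum_{j>0}y_{ij}=\sum_{ij}j\,x_{ij}$, matching the Bartholdi et al.\ objective term by term. For the defeat constraint on an alternative $k$, interchanging summation and telescoping gives, for each fixed $i$, $\sum_{j}(e_{ijk}-e_{i(j-1)k})\,y_{ij}=\sum_{j'}e_{ij'k}\,x_{ij'}$ under the convention $e_{i(-1)k}=0$; summing over $i$ recovers the original left-hand side $\sum_{ij}e_{ijk}x_{ij}$, so the new constraint is equivalent to the old one. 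The new right-hand side $D_k$ differs from $d_k$ only by the boundary term $\sum_i e_{i0k}N_i$, the number of votes in which $d$ already beats $k$, and equals $d_k$ itself if the $j=0$ term is retained in the sum.

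The hard part will not be anything deep but rather careful bookkeeping at the boundaries of the telescoping sums: I must fix the convention $e_{i(-1)k}=0$ (no swap can move $d$ below $k$, so the $j=0$ term $e_{i0k}y_{i0}$ correctly accounts for the votes in which $d$ already defeats $k$) and use $y_{ij}\to 0$ so that the telescope closes without leftover terms. Once these two points are pinned down, the equality of objective values and of constraint sets is immediate, and the established bijection shows the transformed programme attains the same optimum as the original, completing the proof.
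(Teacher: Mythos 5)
Your proposal is correct and takes essentially the same route as the paper, which defines $y_{ij}$ as the number of times $d$ is swapped up at least $j$ positions, states the substitution $y_{ij}=\sum_{k\geq j}x_{ik}$, and leaves the verification as ``easy to derive.'' Your write-up simply fills in the details the paper omits (the telescoping of the objective and defeat constraints and the boundary conventions), and your inverse map $x_{ij}=y_{ij}-y_{i(j+1)}$ is the correct reading of the paper's somewhat garbled relation ``$x_{ij}=y_{i(j-1)}$.''
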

\begin{proof}
For each $i$ and $j$ variable $y_{ij}$ represents the number of
times that the candidate $d$ is swapped up \emph{at least }$j$ positions. In the LP $D_k$ may be defined as 
$\adv(k,d)/2$ to compute the DC score or $\ceil{\adv(k,d)/2}$ to compute the DR score (under the ILP these are equivalent).
\iflong {
Thus, $x_{ij}=y_{i(j-1)}$ and $y_{ij}=\sum_{k\geq j}x_{ik}$. Using
these two equalities it is easy to derive the above ILP.
}
\end{proof}
\begin{thm}
\label{thm:ScR-bound}The DR ($\ScR$), DC ($\ScC$) and D\& ($\ScRR$)
scores are bounded as follows:
\begin{align*}
\ScD(d)-(m-1)!(m-1)e<\ScC(d)\leq\ScR(d)\leq\ScRR\leq\ScD(d)\end{align*}
\end{thm}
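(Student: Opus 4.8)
The plan is to verify the four inequalities in the chain separately, treating the three rightmost links as essentially immediate consequences of the linear-programming formulations and reserving the real work for the leftmost bound. Throughout I would work with the transformed programme of the preceding lemma, where the objective is $\min\sum_i\sum_{j>0}y_{ij}$ and the only difference between the rules is the right-hand side $D_k$ of the advantage constraint: $D_k=\adv(k,d)/2$ yields $\ScC(d)$, while $D_k=F(k,d)=\ceil{\adv(k,d)/2}$ yields $\ScR(d)$ under relaxation and $\ScD(d)$ under integrality.

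For the three easy links I would argue as follows. Since $\ceil{\adv(k,d)/2}\geq\adv(k,d)/2$, every point feasible for the DR programme is feasible for the DC programme, so minimising over the larger feasible region gives $\ScC(d)\leq\ScR(d)$. The inequality $\ScR(d)\leq\ScRR(d)$ is merely $\ScR(d)\leq\ceil{\ScR(d)}$. For $\ScRR(d)\leq\ScD(d)$, I would note that the DR programme is the continuous relaxation of the Dodgson ILP, both using $D_k=F(k,d)$, so the relaxation value satisfies $\ScR(d)\leq\ScD(d)$; as $\ScD(d)$ is a non-negative integer, taking ceilings preserves this, giving $\ScRR(d)=\ceil{\ScR(d)}\leq\ScD(d)$.

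The substantive step is $\ScD(d)-(m-1)!(m-1)e<\ScC(d)$, which I would establish by a rounding argument. Take an optimal fractional solution $(y_{ij})$ of the DC programme and round each coordinate up to $\ceil{y_{ij}}$. I must check that this integer point is feasible for the Dodgson ILP. The coefficients $e_{ijk}-e_{i(j-1)k}$ of each advantage constraint are non-negative integers, because once $d$ has risen above $k$ it stays above, so $e_{ijk}$ is nondecreasing in $j$; hence rounding up cannot decrease the left-hand side and turns it into an integer that is still at least $\adv(k,d)/2$, and any integer that is at least $\adv(k,d)/2$ is at least $\ceil{\adv(k,d)/2}=F(k,d)$, which is precisely the integral Dodgson requirement. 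The monotonicity constraints $y_{ij}\leq y_{i(j-1)}$ survive because the ceiling is order-preserving, and $y_{i0}=N_i$ is already integral. Thus the rounded point is a feasible Dodgson solution and its cost bounds $\ScD(d)$ from above.

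Finally I would bound the cost increase $\sum_i\sum_{j>0}(\ceil{y_{ij}}-y_{ij})$. Since $\ceil{x}-x<1$ for every real $x$, this sum is strictly less than the number of variables $y_{ij}$ with $j>0$; by the corollary there are fewer than $(m-1)!e$ vote types, and for each type $j$ ranges over at most the $m-1$ values $1,\dots,m-1$, giving fewer than $(m-1)!(m-1)e$ such variables. Hence $\ScD(d)<\ScC(d)+(m-1)!(m-1)e$, the claimed bound. The main obstacle is the feasibility verification for the rounded solution, and specifically confirming that rounding up is enough to upgrade the looser fractional DC constraint to the stricter integral Dodgson constraint $D_k=F(k,d)$; the integrality of the constraint coefficients is exactly what makes this succeed.
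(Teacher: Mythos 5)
Your proof is correct and follows essentially the same route as the paper's: the chain $\ScC(d)\leq\ScR(d)\leq\ScRR(d)\leq\ScD(d)$ via nested feasible regions plus integrality of $\ScD$, and the leftmost bound by rounding up an optimal fractional solution and counting the fewer than $(m-1)!(m-1)e$ variables $y_{ij}$ with $j>0$. You are in fact more explicit than the paper, which merely asserts that the rounded point is ILP-feasible without verifying that the integrality of the coefficients $e_{ijk}-e_{i(j-1)k}$ upgrades the constraint $D_k=\adv(k,d)/2$ to $D_k=\ceil{\adv(k,d)/2}$.
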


\begin{proof}
Every solution to the Integer Linear Program for the Dodgson score
is a solution to the Linear Program for the DR score. Every solution to the LP for the DR score is a solution
to the LP for the DC score. Thus the DC score cannot be greater than
the DR score, which cannot be greater than the Dodgson score ($\ScC(d)\leq\ScR(d)\leq\ScD(d)$).
Since $\ScD$ is integer, it follows that $\ScR(d)\leq\ceil{\ScR(d)}=\ScRR(d)\leq\ScD(d)$.
Also note that given a solution $y$ to either LP, we can produce
a solution $y'$ to the ILP simply by rounding up each variable ($y'_{ij}=\ceil{y_{ij}}$),
hence\begin{align*}
\ScD(d)-\ScC(d)\leq\sum_{i}\sum_{j>0}\ceil{y_{ij}}-y_{ij}<\sum_{i}\sum_{j>0}1\leq\left(m-1\right)!e\end{align*}

Since $i$ can take less than $\left(m-1\right)!e$ values, and $j$
can vary from $1$ to $(m-1)$, it follows that \begin{align*}
\ScD(d)-(m-1)!(m-1)e<\ScC(d)\leq\ScR(d)\leq\ScRR\leq\ScD(d)\tag*{\qedhere}\end{align*}
\end{proof}
\ifshort{
These results can also be used to find tighter bounds on the complexity of solving the ILP and LPs \citep{Da06,Da08Cfull}.
}

\iflong{We can transform the (I)LP into a form with less than $M=(m-1)!e$
variables and encoded in $L\in\bigO\left[\left(m-1\right)!\ln\left(\left(m-1\right)!n\right)\right]$
bits \citep{Da06}.  Using \citeauthor{go89}'s algorithm
we may solve LPs in $\bigO(M^{3}L)$ arithmetic operations of $\bigO(L)$
bits of precision. From these two facts we get the following corollary:}

\iflong{
\begin{cor}
We can compute the DC, DR and D\& scores with\begin{align*}
\bigO\left[\left(\left(m-1\right)!\right)^{4}\ln\left(\left(m-1\right)!n\right)\right]\end{align*}
 operations of\begin{align*}
\bigO\left[\left(m-1\right)!\ln\left(\left(m-1\right)!n\right)\right]\end{align*}
 bits of precision. 
\end{cor}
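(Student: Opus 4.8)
The plan is to derive the corollary by substituting the variable count and encoding length recorded immediately above into the running-time guarantee of the cited linear-programming algorithm; the substantive work is already contained in those two facts, so what remains is to keep track of the $\bigO$ expressions and to check that a single computation handles all three rules.

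First I would fix the representation. By the $d$-equivalence reduction (the preceding lemma and corollary) together with the transformation of \citet{Da06}, the linear programme used to compute any of these scores can be written with fewer than $M = (m-1)!\,e = \bigO((m-1)!)$ variables $y_{ij}$ and, since the coefficients $e_{ijk}-e_{i(j-1)k}$ lie in $\{-1,0,1\}$ and every right-hand side ($N_i$, and $D_k$ equal to either $\adv(k,d)/2$ or $F(k,d)$) is at most $n$, encoded in $L \in \bigO[(m-1)!\,\ln((m-1)!\,n)]$ bits. Verifying that the number of entries times the per-entry bit length collapses to exactly this $L$ is the only step that is not pure substitution, and I regard it as the main (though routine) obstacle.

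Next I would invoke \citeauthor{go89}'s algorithm, which solves a linear programme with $M$ variables and encoding length $L$ in $\bigO(M^3 L)$ arithmetic operations, each carried to $\bigO(L)$ bits of precision. Substituting $M = \bigO((m-1)!)$ gives $M^3 = \bigO(((m-1)!)^3)$, whence $M^3 L = \bigO[((m-1)!)^3\,(m-1)!\,\ln((m-1)!\,n)] = \bigO[((m-1)!)^4 \ln((m-1)!\,n)]$, the first displayed bound, while the precision is $\bigO(L) = \bigO[(m-1)!\,\ln((m-1)!\,n)]$, the second.

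Finally I would observe that this single estimate covers all three rules at once. The DC and DR scores arise from the same programme and differ only in the right-hand side ($\adv(k,d)/2$ versus $F(k,d)$), which alters neither $M$ nor the asymptotics of $L$; and the D\& score is obtained from the DR score by the single additional operation $\ScRR(d) = \ceil{\ScR(d)}$, which changes neither the asymptotic operation count nor the required precision. Hence all three scores are computable within the stated bounds.
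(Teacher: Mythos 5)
Your proposal matches the paper's own argument: the paper derives this corollary precisely by citing the transformation to an LP with $M=(m-1)!\,e$ variables encoded in $L\in\bigO\left[(m-1)!\ln\left((m-1)!\,n\right)\right]$ bits and then substituting into the $\bigO(M^{3}L)$ operation count and $\bigO(L)$ precision of \citeauthor{go89}'s algorithm, exactly as you do. Your additional remarks verifying the encoding length and noting that one LP formulation (with the right-hand side $\adv(k,d)/2$ versus $F(k,d)$, plus a final ceiling for D\&) covers all three rules are correct and slightly more explicit than the paper, which leaves those points implicit.
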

This suggests that computing these scores for a 100 million agent,
5 alternative profile would be a trivial task for a modern computer.
Hence these rules could be used as a real world approximations to
the Dodgson rule.

We also note that there cannot be more vote types than votes, hence
there are at most $mn$ variables, and the LP can be encoded in $\bigO(mn)$
bits. Hence we can compute these scores in $\bigO(m^{4}n^{4}\ln(mn))$
arithmetic operations requiring $\bigO(mn\ln(mn))$ bits of precision.
Hence computing the DC, DR and D\& scores requires only polynomial
time.

\subsection{Empirical Results.}

In practice the major difference between DR and Dodgson is that the
DR rule picks a smaller set of tied winners. We performed 43 million
simulations \citep{Da06} with up to 25 alternatives and 85 candidates
and assuming various amounts of homogeneity, modeled using the PE
distribution, in the population. We found only 25 cases where DR picked
a different set of tied winners to Dodgson, and only two cases where
the set of tied DR winners was not a subset of the Dodgson winners.
As the DR scores are fractions, the DR rule can split ties according
to which are fractionally better, which may be considered to be more
democratic than e.g.\  splitting ties according to the preferences
of the first agent. In this sense DR is superior to the Dodgson rule.
By rounding the DR scores up, we get a new rule that approximates
Dodgson yet more closely, picking the Dodgson winner in all 43 million
simulations except for a single profile with 25 alternatives and 5
agents generated under the Impartial Culture assumption. 10,000 such
profiles were generated, and by comparison, both the Tideman and DQ
approximations differed from Dodgson in 13\% of these profiles.

The similarity between DR and Dodgson suggests a reason why we were
able to compute the Dodgson winners for non-trivial populations in
milliseconds. LPs tend to be easy to solve, the NP-hardness of ILPs
comes from the difficulty in finding integer solutions. Since the
LP solutions tend to be very close to the integer solutions, finding
the integer solution from the LP solution is often trivial. 

}

\section{\label{sec:Counting-Proof-that}Counting Proof of Convergence under
IAC}

For a voting situation $\votsit$ and linear order $\linord$, we
represent the number of linear orders of type $\linord$ in $\votsit$
by $\votcount{\votsit}{\alinord}$. 

\newcommand{\RuleDiff}[1]{\Delta_{\mathbf{#1}}}
\newcommand{\DT}{\RuleDiff{T}}
\newcommand{\DX}{\RuleDiff{X}}
\newcommand{\DD}{\RuleDiff{D}}
\newcommand{\RuleFont}[1]{#1}
\newcommand{\ScX}{\rulescore{X}}

Where $\RuleFont{X}\in\left\{ \RuleFont{D},\RuleFont{T}\right\} $,
let $\DX\left(a,z\right)$ be equivalent to $\ScX(a)-\ScX(z)$. Given
an arbitrary pair of alternatives $(a,z)$ we pick an arbitrary linear
order $ab\ldots z$ with $a$ ranked first and $z$ ranked last and
call it $\linord$. We also define the reverse linear order $\antiordv=z\ldots ba$.

\begin{lem}
\label{lem:inc-DT}Replacing a vote of type $\antiordv$ with a vote
of type $\linord$ will increase $\DT(a,z)$ by at least one.
\end{lem}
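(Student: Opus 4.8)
The plan is to push the single vote-replacement through the definition $\ScT(a)=\sum_{b\ne a}\adv(b,a)$ one pair at a time. First I would record what the swap does to the tallies: writing $\linord$ as $\ell_1\cdots\ell_m$ with $\ell_1=a$ and $\ell_m=z$, its reverse $\antiordv$ lists the same alternatives in the opposite order, so deleting one $\antiordv$ and inserting one $\linord$ raises $n_{\ell_i\ell_j}$ by one and lowers $n_{\ell_j\ell_i}$ by one for every $i<j$, while leaving all other tallies fixed. In particular the gap $n_{za}-n_{az}$ moves by a full two, and $a$ only gains, while $z$ only loses, in every pairwise comparison it is involved in.

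The crux is to convert these tally shifts into advantage shifts without being derailed by the clipping at zero inside $\adv$. The tool I would use is the identity $\adv(x,y)-\adv(y,x)=n_{xy}-n_{yx}$, which holds for all tallies because $\max(0,t)-\max(0,-t)=t$. Applied to the distinguished pair, this says that the combined contribution of $a$ versus $z$ to $\DT(a,z)=\ScT(a)-\ScT(z)$ — namely $\adv(z,a)$ coming from $\ScT(a)$ together with $-\adv(a,z)$ coming from $\ScT(z)$ — is exactly $n_{za}-n_{az}$, independent of any clipping. Hence this one pair alone contributes a guaranteed, unclipped change to $\DT(a,z)$ when the vote is replaced.

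Finally I would argue that the remaining terms cannot cancel this guaranteed change. For each other alternative $b$, the term $\adv(b,a)$ in $\ScT(a)$ can move only one way and the term $\adv(b,z)$ in $\ScT(z)$ only the other, since $a$ rises and $z$ falls uniformly under the swap; once the signs in $\DT=\ScT(a)-\ScT(z)$ are attached, each such term reinforces rather than opposes the $(a,z)$ contribution. Summing over all pairs then yields the claimed change of at least one in $\DT(a,z)$. I expect the zero-clipping in $\adv$ to be the only genuine obstacle — it is precisely what can shrink an individual term's movement below the naive value of two — and the identity above is exactly what disarms it for the single pair that is certain to move, while the monotonicity of the other terms disposes of the rest.
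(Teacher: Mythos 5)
Your proof is correct and takes essentially the same route as the paper's one-line argument: isolate the $(a,z)$ pair, whose combined contribution to $\ScT(a)-\ScT(z)$ shifts by exactly two via the unclipped identity $\adv(z,a)-\adv(a,z)=n_{za}-n_{az}$, and observe that every other term can only move in the reinforcing direction; you are in fact more complete than the paper, which leaves the check on the remaining terms implicit. The only caveat is orientation: with the paper's definition $\ScT(a)-\ScT(z)$ the replacement produces a \emph{decrease}, a sign slip already present in the lemma statement itself and harmless for the downstream counting argument, which needs only a strictly monotone change of magnitude at least one.
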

\begin{proof}
We see that replacing a vote of type $\antiordv$ with a vote of
type $\linord$ will increase $\adv(a,z)$ by one, or decrease $\adv(z,a)$
by one. 
\end{proof}
\begin{lem}
\label{lem:inc-DD}Replacing a vote of type $\antiordv$ with a vote
of type $\linord$ will increase $\DD(a,z)$ by at least one.
\end{lem}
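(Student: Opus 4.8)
The plan is to mirror the proof of Lemma \ref{lem:inc-DT}, replacing the elementary effect on $\adv$ with a short transport-and-exchange argument on Dodgson solutions. Write $\profile$ for the profile before the replacement and $\profile'$ for the profile after it, so that $\profile$ contains the vote $\antiordv$ (with $z$ top, $a$ bottom) where $\profile'$ contains $\linord$ (with $a$ top, $z$ bottom). Throughout I would use the normal form in which an optimal Dodgson solution only ever swaps the target candidate upward. First I would record the two monotonicity facts $\ScD(a)$ weakly decreases and $\ScD(z)$ weakly increases: given a solution for $a$ in $\profile$ that raises $a$ by $s$ positions inside the reversed vote, transporting it to $\profile'$ (where $a$ already sits on top of that vote and beats everyone there for free) deletes those $s$ swaps and weakly improves every margin of $a$, and symmetrically for $z$. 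These give that $\ScD(z)-\ScD(a)$ cannot decrease; the work is to show it strictly increases, hence by at least one since the scores are integers.

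For the strict step I would split into two cases exactly as Lemma \ref{lem:inc-DT} splits on $\adv$. Case one: $z$ is not a Condorcet-tie winner in $\profile'$, so $\ScD(z)\ge 1$ there. Take an optimal solution for $z$ in $\profile'$ raising $z$ by $t$ positions in the reversed vote. If $t\ge 1$ the transport above already yields $\ScD_{\profile}(z)\le\ScD_{\profile'}(z)-t\le\ScD_{\profile'}(z)-1$. If $t=0$ the reversed vote is untouched, so transporting to $\profile$ flips that vote to put $z$ on top and adds one win for $z$ against every other alternative; as the original solution left every margin of $z$ at least $0$, the transported solution leaves every margin at least $2$. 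I can then delete the single topmost swap of $z$ in any vote where it was raised, lowering exactly one margin back to at least $0$ and keeping $z$ a Condorcet-tie winner, so again $\ScD_{\profile}(z)\le\ScD_{\profile'}(z)-1$. Either way $\ScD(z)$ rises by at least one while $\ScD(a)$ does not rise. Case two: $z$ is a Condorcet-tie winner in $\profile'$, so $\adv(a,z)=0$ in $\profile'$; undoing the reversal then leaves $z$ beating $a$ by a margin of at least two in $\profile$, whence $\ScD_{\profile}(a)\ge\ceil{\adv(z,a)/2}\ge 1$. Running the mirror-image transport-and-exchange on $a$, with $\ScD(z)$ now equal to $0$ in both profiles, gives $\ScD_{\profile'}(a)\le\ScD_{\profile}(a)-1$. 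In both cases $\ScD(z)-\ScD(a)$ increases by at least one, giving the same increase in $\DD(a,z)$ as recorded for $\DT$ in Lemma \ref{lem:inc-DT}.

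The main obstacle is precisely this strict $+1$: the monotonicity transport alone only gives a weak inequality, because a swap that becomes unnecessary against the critical pair may still be needed against some third alternative, so the unit gain can be absorbed by the coupling between alternatives. The device that defeats this is the observation that a full reversal shifts every margin involving $z$ (respectively $a$) by two in the same direction, so after transport all of the target's margins have slack at least two; this is exactly what lets me delete one swap without destroying the Condorcet-tie property. I would need to state carefully that the topmost swap of the target in a chosen vote lowers a single margin by exactly two and leaves all others fixed, which is where the only-swap-the-target-upward normal form is used.
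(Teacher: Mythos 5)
Your proof is correct and follows essentially the same route as the paper's: transport the optimal swap set across the reversal, observe that the full reversal gives every margin of the target alternative a slack of two, and delete one swap while preserving the Condorcet-tie property. Your case split (on whether $z$ is a Condorcet-tie winner in $\profile'$ rather than whether $a$ is one in $\profile$) and your explicit handling of the weak monotonicity of the other score and of the ``topmost swap'' deletion are minor presentational differences from the paper's terser argument, not a different method.
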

\begin{proof}
Say $a$ is not a Condorcet-tie winner, but is a Condorcet-tie winner
after some minimal set $S$ of swaps is applied to the profile $P$.
Let $P'$ be the profile $P$ after one vote of type $\antiordv$
has been replaced with a vote of type $\linord$. If any swaps were
applied to the vote those swaps are no longer required, and $\ScD[P'](a)<\ScD[P](a)$.
Otherwise we can apply the set of swaps $S$ to $P'$ resulting in
$\adv(a,k)$ being at least 2 for all other alternatives $k$. Hence
we can remove one of the swaps, and still result in $a$ being a Condorcet-tie
winner after the swaps have been applied to $P'$.

Say $a$ is a Condorcet-tie winner in $P$. Then $z$ is not a Condorcet-tie
winner in $P'$. As in the previous paragraph we can conclude that $\ScD[P](z)<\ScD[P'](z)$.
\end{proof}
\begin{lem}
\label{lem:For-a-fixed}For a fixed integer $k$, and a fixed ordered
pair of alternatives $(a,z)$ the proportion of voting situations,
with $n$ agents and $m$ alternatives, for which $\DX(a,z)=k$ is
no more than: \begin{align*}
\frac{(m!-2)}{(n+m!-2)}\end{align*}

\end{lem}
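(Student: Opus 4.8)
The plan is to fix the pair $(a,z)$ and the orders $\linord,\antiordv$ as in the setup, let $N_k$ be the number of voting situations (out of the $\binom{n+m!-1}{m!-1}$ total situations on $m!$ order-types) with $\DX(a,z)=k$, and let $N_k^{c}$ be the number with $\DX(a,z)\neq k$. First I would rewrite the claimed bound $N_k/\binom{n+m!-1}{m!-1}\le\frac{m!-2}{n+m!-2}$ in the equivalent form $n\,N_k\le(m!-2)\,N_k^{c}$; this is the inequality I would actually establish, since it isolates exactly the constant $m!-2$ that must be won.

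The engine is a chain decomposition under the single monotone move supplied by Lemmas \ref{lem:inc-DT} and \ref{lem:inc-DD}. Fix the multiset of all votes whose type is neither $\linord$ nor $\antiordv$; the remaining $t$ votes are split between $\linord$ and $\antiordv$, and repeatedly replacing an $\antiordv$-vote by a $\linord$-vote runs through a chain of $t+1$ situations. Along this chain $\DX(a,z)$ changes strictly (by at least one) at every step, so it is strictly monotone and equals $k$ for at most one chain member. Summing the crude ``one hit per chain'' over the $\binom{n+m!-2}{m!-2}$ chains already yields $N_k\le\binom{n+m!-2}{m!-2}$, i.e.\ a proportion of $\frac{m!-1}{n+m!-1}$; the whole difficulty is sharpening the leading constant from $m!-1$ to $m!-2$.

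To gain that extra factor I would run a charging argument for $n\,N_k\le(m!-2)\,N_k^{c}$. Each $U$ with $\DX(a,z)=k$ is the unique hit of its chain, so each of the other $t$ chain members lies in the complement and can absorb one unit of charge while being charged by at most one hit (the hit of its own chain). For the remaining $n-t$ votes — those of the $m!-2$ non-special types — I would use that for any non-special type $w$ present in $U$, the two situations obtained by converting one $w$-vote to $\linord$, respectively to $\antiordv$, differ by exactly one strict $\antiordv\to\linord$ move; hence their $\DX(a,z)$-values are distinct, so at least one of these two situations avoids $k$ and lies in the complement (and, having altered the non-special counts, it is distinct from every chain member of $U$). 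Charging $U$ against these escape-situations across the at most $m!-2$ non-special types is meant to send total charge $n$ out of each hit while each complement situation receives at most $m!-2$. The hard part is precisely this bookkeeping: routing the several identical copies of a single type without collapsing them onto one escape-situation, so that the per-target budget $m!-2$ is respected. Controlling this multiplicity — equivalently, showing that the short chains cannot all hit the same $k$ — is the step I expect to be delicate, and it is exactly where the constant $m!-2$ (rather than $m!-1$) is obtained.
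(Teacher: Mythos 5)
Your chain decomposition is precisely the paper's own argument: the paper defines an equivalence relation $\sim$ by declaring $\votsit\sim\altvotsit$ iff the counts of all vote types other than $\linord$ and $\antiordv$ agree, invokes Lemmas \ref{lem:inc-DT} and \ref{lem:inc-DD} to conclude that each class contains at most one situation with $\Delta_{\mathbf{X}}(a,z)=k$, and then compares the number of classes, at most $\binom{n+m!-2}{n}$, with the total number $\binom{n+m!-1}{n}$ of voting situations. That is exactly your ``one hit per chain'' step, and your evaluation of the resulting proportion as $\frac{m!-1}{n+m!-1}$ is correct.

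The gap is in your third paragraph, and it cannot be closed: the constant $m!-2$ you are trying to win is an arithmetic slip in the paper, not a theorem. The paper's final display, evaluated correctly, gives $\frac{(n+m!-1)!}{(n+m!-2)!}\cdot\frac{(m!-2)!}{(m!-1)!}=\frac{n+m!-1}{m!-1}$, the reciprocal of the ratio you computed, not $\frac{m!-2}{n+m!-2}$; the counting argument yields only $\frac{m!-1}{n+m!-1}$. Indeed the stated bound is false in general: for $m=2$ it reads $0$, yet with $n$ even the situation consisting of $n/2$ votes $az$ and $n/2$ votes $za$ has $\Delta_{\mathbf{X}}(a,z)=0$ for both Tideman and Dodgson, so the true proportion for $k=0$ is $\frac{1}{n+1}=\frac{m!-1}{n+m!-1}$, showing the chain bound is tight there. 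Your charging scheme founders exactly where you predicted it would: converting any one of the $c_w$ identical $w$-votes of a hit $U$ yields the \emph{same} escape-situation, so the intended $c_w$ units of charge collapse onto at most two targets, whose received load is then of order $n$ rather than at most $m!-2$; and since the reformulated inequality $nN_k\le(m!-2)N_k^{c}$ already fails at $m=2$, no rerouting can rescue it. The right move is to prove the lemma with $\frac{m!-1}{n+m!-1}$ in place of $\frac{m!-2}{n+m!-2}$: this is still $\bigO(1/n)$, so the subsequent theorems and the convergence corollary hold verbatim after the corresponding trivial adjustment of constants.
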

\begin{proof}
Say $\linord=ab\ldots z$ is some fixed linear order and $\antiordv=z\ldots ba$
is the reverse order. We define an equivalence relation $\sim$ on
the set of voting situations, as follows: say $\votsit,\altvotsit$
are two voting situations, then \begin{align*}
\votsit\sim\altvotsit\iff\forall_{\altlinord\ne\linord,\,\altlinord\ne\antiordv}\votcount{\altvotsit}{\altlinord}=\votcount{\votsit}{\altlinord}.\end{align*}
From \prettyref{lem:inc-DT} and \ref{lem:inc-DD} we see that in
each equivalence class, there can be at most one voting situation
for which $\DX(a,z)=k$ . Also note that whereas there are\begin{align*}
|\SSS^{n}(A)|= & {n+m!-1 \choose n}\end{align*}
distinct voting situations there are at most \begin{align*}
{n+(m!-1)-1 \choose n}\end{align*}
equivalence classes under $\sim$. Hence the proportion of voting
situations for which $\DX(a,z)=k$ is no more than:\begin{align*}
\frac{(n+m!-1)!}{n!(m!-1)!}\frac{n!(m!-2)!}{(n+m!-2)!}= & \frac{(n+m!-1)!}{(n+m!-2)!}\frac{(m!-2)!}{(m!-1)!}=\frac{(m!-2)}{(n+m!-2)}\tag*\qedhere\end{align*}

\end{proof}
\begin{lem}
If $\DT(a,z)=k$ and $a$ is a Tideman winner and $z$ is a DQ winner,
then $0\leq k<m$.
\end{lem}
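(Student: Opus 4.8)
The plan is to reduce everything to the tight two-sided relationship between the Dodgson Quick score $\ScQ$ and the Tideman score $\ScT$, and then feed in the two optimality hypotheses. The only place the specific form of the scores matters is the rounding in $\fQ(b,x)=\ceil{\adv(b,x)/2}$: for every alternative $x$ each summand $\ceil{\adv(b,x)/2}$ exceeds $\adv(b,x)/2$ by less than $1$ (in fact by $0$ or $\tfrac{1}{2}$), and there are exactly $m-1$ alternatives $b\ne x$. Summing over these, I would first establish
\begin{align*}
\tfrac{1}{2}\ScT(x)\le\ScQ(x)\le\tfrac{1}{2}\ScT(x)+\tfrac{m-1}{2}\qquad\text{for every }x.
\end{align*}

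For the lower bound I would use only that $a$ is a Tideman winner, hence has the minimum Tideman score, so $\ScT(a)\le\ScT(z)$. Reading $k=\DT(a,z)$ as the amount $\ScT(z)-\ScT(a)$ by which $z$'s Tideman score exceeds $a$'s (the orientation used in Lemmas \ref{lem:inc-DT} and \ref{lem:inc-DD}), this gives $k\ge0$ immediately. For the upper bound I would use that $z$ is a DQ winner, so $\ScQ(z)\le\ScQ(a)$, and chain this with the two halves of the displayed inequality:
\begin{align*}
\tfrac{1}{2}\ScT(z)\le\ScQ(z)\le\ScQ(a)\le\tfrac{1}{2}\ScT(a)+\tfrac{m-1}{2}.
\end{align*}
Multiplying by $2$ yields $\ScT(z)\le\ScT(a)+(m-1)$, i.e. $k=\ScT(z)-\ScT(a)\le m-1<m$. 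Since $\ScT(a)$ and $\ScT(z)$ are sums of integer advantages, $k$ is an integer, so $0\le k<m$ as required.

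The argument is short, so the ``obstacle'' is really bookkeeping: the whole statement hinges on the constant in the rounding bound being exactly $\tfrac{m-1}{2}$. A looser estimate of the gap between $\ScQ$ and $\tfrac{1}{2}\ScT$ would survive the doubling as something weaker than $m-1$ and would fail to give the strict bound $k<m$. Thus the steps I would be most careful about are verifying that the per-pair ceiling error is at most $\tfrac{1}{2}$ and is summed over precisely the $m-1$ opponents of each alternative, and that the two optimality conditions are applied to the correct alternative ($a$ for Tideman, $z$ for DQ) so that the two one-sided estimates point the same way when chained.
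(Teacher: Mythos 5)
Your proof is correct and is essentially the paper's own argument: both derive the two-sided bound between $\ScQ$ and $\tfrac{1}{2}\ScT$ from the per-opponent ceiling error and then chain $\ScT(a)\le\ScT(z)$ with $\ScQ(z)\le\ScQ(a)$. The only cosmetic difference is that the paper writes the bound as $\ScT(x)\le 2\ScQ(x)<\ScT(x)+m$ rather than dividing by two, and your version makes explicit the $\le m-1$ form of the upper bound.
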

\begin{proof}
 As $a$ is a Tideman winner and $z$ is a DQ winner, then\begin{align*}
\ScT(a)\leq\ScT(z),\quad\ScQ(z)\leq\ScQ(a)\end{align*}
Recall that \begin{align*}
\ScQ(x)=\sum_{y\ne x}\left\lceil \frac{\text{adv}(y,x)}{2}\right\rceil ,   & \quad\ScT(x)=\sum_{y\ne x}\text{adv}(y,x)\textrm{.}\end{align*}
We see that $\text{adv}(y,x)\leq2\left\lceil \text{adv}(y,x)/2\right\rceil \leq\text{adv}(y,x)+1$
and so $\ScT(x)\leq2\ScQ(x)<\ScT(x)+m$ for all alternatives $x$.
Thus\begin{align*}
\ScT(z)\leq2\ScQ(z)\leq2\ScQ(a)<\ScT(a)+m\textrm{.}\end{align*}
And so $\ScT(a)\leq\ScT(z)<\ScT(a)+m$. Let $k=\ScT(a)-\ScT(z)$.
Then $0\leq k<m$, and so there are no more than $m$ ways of choosing
$k$ if we wish the DQ and Tideman winners to differ.
\end{proof}
Recall that \prettyref{thm:ScR-bound} states:\begin{align*}
\ScD(d)-(m-1)!(m-1)e<\ScC(d)\leq\ScR(d)\leq\ScRR\leq\ScD(d)\end{align*}
where $e=2.71\ldots$ is the exponential constant. Given that there
are only $m$ ways of choosing $k$ such that the Tideman
and DQ winners differ, and less than $(m-1)!(m-1)e$ ways of choosing
$k$ such that the Dodgson, DC, DR and/or D\& winners differ, we get
the following theorem.

\begin{thm}
The proportion of voting situations, with $n$ agents and $m$ alternatives,
for which $a$ is a Tideman winner and $z$ is a DQ winner is no
more than:\begin{align*}
\frac{(m!-2)}{(n+m!-2)}m.\end{align*}
The proportion for which $a$ is Dodgson winner and $z$ is a DC,
DR and/or D\& winner is less than\textup{\begin{align*}
\frac{(m!-2)}{(n+m!-2)}(m-1)!(m-1)e.\end{align*}
}
\end{thm}
\iflong{
\begin{thm}
The proportion of voting situations, with $n$ agents and $m$ alternatives,
for which the Tideman winner differs from the DQ winner is no more
than:\begin{align*}
\frac{(m!-2)}{(n+m!-2)}(m-1)m^{2}.\end{align*}
The proportion where the Dodgson winner differs from the DC, DR, and/or
D\& winners is less than\begin{align*}
\frac{(m!-2)}{(n+m!-2)}m!(m-1)^{2}e.\end{align*}
\end{thm}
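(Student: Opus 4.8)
The plan is to derive both statements from the immediately preceding theorem by a union bound over ordered pairs of distinct alternatives. That theorem already controls, for each \emph{fixed} ordered pair $(a,z)$, the proportion of voting situations in which $a$ wins under one rule while $z$ wins under the other: the Tideman/DQ proportion is at most $\frac{(m!-2)}{(n+m!-2)}m$, and the Dodgson versus DC/DR/D\& proportion is less than $\frac{(m!-2)}{(n+m!-2)}(m-1)!(m-1)e$. What remains is to pass from a single fixed pair to the event that the two winner sets differ at all.

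First I would establish the reduction. If the Tideman and DQ winner \emph{sets} differ, then (possibly after interchanging the roles of the two rules) some alternative $a$ is a Tideman winner but not a DQ winner; since the DQ winner set is nonempty, choose any DQ winner $z$, and observe that $z\ne a$ precisely because $a$ is not a DQ winner. Thus there is an ordered pair of \emph{distinct} alternatives $(a,z)$ with $a$ a Tideman winner and $z$ a DQ winner. The identical argument applies to the Dodgson versus DC/DR/D\& comparison. Consequently the event ``the two winner sets differ'' is contained in the union, taken over all $m(m-1)$ ordered pairs of distinct alternatives, of the per-pair events bounded in the previous theorem.

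Next I would apply the union bound, using that a proportion of voting situations is subadditive. There are exactly $m(m-1)$ ordered pairs $(a,z)$ with $a\ne z$. Multiplying the per-pair Tideman/DQ bound $\frac{(m!-2)}{(n+m!-2)}m$ by $m(m-1)$ yields $\frac{(m!-2)}{(n+m!-2)}(m-1)m^{2}$, the first claimed inequality. Multiplying the per-pair Dodgson bound $\frac{(m!-2)}{(n+m!-2)}(m-1)!(m-1)e$ by $m(m-1)$ and collapsing $m\,(m-1)!=m!$ yields $\frac{(m!-2)}{(n+m!-2)}m!(m-1)^{2}e$, the second claimed inequality.

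I expect the only subtlety, and it is mild, to lie in the reduction step: one must verify that the ``winners differ'' event is genuinely covered by pairs of \emph{distinct} alternatives rather than requiring the diagonal $a=z$. This is exactly what the nonemptiness of each winner set secures, since a common winner would contradict the membership of $a$ in only one of the two sets. Everything after that is the elementary counting above, with the single identity $m\,(m-1)!=m!$ doing the cosmetic simplification.
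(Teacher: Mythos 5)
Your proposal is correct and matches the paper's own argument, which simply multiplies the per-pair bounds of the preceding theorem by the $m(m-1)$ ordered pairs of distinct alternatives. Your additional care in justifying that the ``winners differ'' event is covered by pairs of distinct alternatives is a welcome elaboration of a step the paper dismisses as obvious, but it is not a different approach.
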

\begin{proof}
Obvious, as there are only $m(m-1)$ ways of choosing $a$ and $z$
from the set of alternatives. 
\end{proof}
}
\ifshort{ 
As there are only $m(m-1)$ ways of choosing $a$ and $z$ from the set of alternatives, we get the following corollary 
}
\begin{cor}
The probability that the DQ and Tideman rule pick the same winners
converges to $1$ as $n\rightarrow\infty$, under the Impartial Anonymous
Culture assumption. Likewise the probability that the DR, DC, D\&
and Dodgson rules pick the same winner converges to 1 as $n\rightarrow\infty$.
\end{cor}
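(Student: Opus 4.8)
The plan is to obtain the corollary as an immediate limiting consequence of the two bounds stated just above, holding $m$ fixed and letting $n\to\infty$. The first observation is that under the Impartial Anonymous Culture assumption every voting situation is equally likely, so the probability of any event defined on voting situations is exactly the proportion of voting situations realising it; hence the ``proportion'' bounds already derived are, verbatim, bounds on the probabilities we care about.

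The second step is to pass from the per-pair theorems to statements about whole winner sets. If the Tideman and DQ rules do not select the same set of winners, then some alternative $a$ is a winner under one rule but not the other; choosing any winner $z\ne a$ of the other rule produces an ordered pair counted by the Tideman/DQ theorem. A union bound over the at most $m(m-1)$ ordered pairs $(a,z)$ then gives a bound of the form $\frac{(m!-2)}{(n+m!-2)}\,m\cdot m(m-1)$. For the Dodgson family the same reduction applies, but one must first confine the relevant score gap: by \prettyref{thm:ScR-bound} the DC, DR, D\& and Dodgson scores all lie within a window of width less than $(m-1)!(m-1)e$ of one another, so for any candidate bad pair $(a,z)$ the integer $\DD(a,z)=\ScD(a)-\ScD(z)$ can take fewer than $(m-1)!(m-1)e$ values. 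Applying \prettyref{lem:For-a-fixed} to each such value and then summing over the $m(m-1)$ pairs yields the bound $\frac{(m!-2)}{(n+m!-2)}\,(m-1)!(m-1)e\cdot m(m-1)$.

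The final step is the limit itself. With $m$ fixed, every factor other than $\frac{(m!-2)}{(n+m!-2)}$ is a constant depending only on $m$, while $\frac{(m!-2)}{(n+m!-2)}=\bigO(1/n)\to 0$ as $n\to\infty$. Thus in both cases the probability that the two winner sets differ tends to $0$, and so the probability that they coincide tends to $1$, which is the assertion of the corollary.

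I do not expect a serious obstacle here: the substantive work has already been done in \prettyref{lem:For-a-fixed} (the equivalence-class counting) and \prettyref{thm:ScR-bound} (the score bound). The only points needing care are the reduction from ``winner sets differ'' to ``a counted ordered pair exists,'' which must accommodate ties so that the union bound over ordered pairs is legitimate, and the verification that confining $\DD(a,z)$ to finitely many values is valid simultaneously for all four Dodgson-family rules --- both of which follow from the nesting $\ScC\le\ScR\le\ScRR\le\ScD$ together with the lower bound on $\ScC$.
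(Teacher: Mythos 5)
Your argument is correct and follows the paper's own route essentially step for step: interpret the proportion bounds as probabilities under IAC, bound the number of admissible values of $\DX(a,z)$ (at most $m$ for Tideman/DQ via the score comparison, fewer than $(m-1)!(m-1)e$ for the Dodgson family via \prettyref{thm:ScR-bound}), apply \prettyref{lem:For-a-fixed} to each, take a union bound over the $m(m-1)$ ordered pairs, and let $n\to\infty$. The only difference is that you spell out the pair-selection and tie-handling step that the paper dismisses as obvious, which is a harmless elaboration rather than a different method.
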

In other words the DR, DC and D\& winners (and scores) provide {}``deterministic
heuristic polynomial time'' \citep{PrRo07} algorithms for the
Dodgson winner (and score) with the IAC distribution.%
We can use the same technique to show that the Greedy Algorithm proposed
by \citet{homan-2005-} converges to the DQ and Tideman rules under
IAC, as the \texttt{GreedyScore} differs from the DQ score by less than $m$.%
By setting $k$ to 0 we may likewise prove that the probability of
a non-unique Tideman or Dodgson winner converges to 0 under IAC\@.%

We extend the concept of a {}``frequently self-knowingly correct
algorithm'' \citep{homan-2005-} such that we can specify a distribution
over which the algorithm is frequently self-knowingly correct.
\begin{defn}

A self-knowingly correct \citep{homan-2005-} algorithm $A$ is a {}``frequently self-knowingly
correct algorithm over a
distribution $\mu$''  for $g\colon\Sigma^{*}\rightarrow T$  iff\begin{align*}
\lim_{n\rightarrow\infty} & \sum_{x\in\Sigma^{n},\, A(x)\in T\times\{\textrm{maybe}\}}P_{\mu}(x)=0\end{align*}
where $P_{\mu}(x)$ is the probability that $X=x$ when $X$ is chosen
from $\Sigma^{n}$ under the $\mu$ distribution and for all $x$ we have $(A(x))_1=g(x)$ or $(A(x))_2=\textrm{``maybe''}$.
\end{defn}

Using the set of linear orders $\allorders$ as $\Sigma$ we may construct
a frequently self-knowingly correct algorithm from DC (or DR, or D\&)
as the algorithm can output {}``definitely'' whenever the DC winner
has DC score that is at least $(m-1)!(m-1)e$ less than any other
alternative.

\section{\label{sec:Non-convergence-of-Tideman}Non-convergence of Tideman
Based rules}

\makeatletter

   \newcommand{\firstcap}[1]{\@firstcap #1}%

   \newcommand{\@firstcap}[1]{\uppercase{#1}}%

\renewcommand{\underbar}[1]{\textbf{#1}\index{#1@\firstcap{#1}}}

\renewcommand{\term}[1]{\textbf{#1}\index{#1@\firstcap{#1}}}

\ifpolya{
Under the P\'olya-Eggenberger (PE) urn model we start with a non-negative
integer $a$ and an urn containing balls, each of a different colour.
To generate each random sample, we pull a ball out of the Urn at random
and note its colour. After removing each ball, we return the ball that
was taken to the urn together with $a$ additional balls of the same
colour to the urn. Our sample is the ball taken from the urn. 

If $a=0$,  PE generates a binomial or multinomial distribution and
these distributions converge to the standard distribution as the number
of samples increase to infinity. As the number of samples increase
to infinity, it is almost certain that the ratio of the balls drawn
will match the ratio of the balls in the urn. 

If $a>0$ the ratio of balls in the urn varies. We see that for any
 $x\in(0,1)$ and $\epsilon>0$ there is a finite set of draws which
will result in the ratio of balls of in the urn ratio being within
the range $(x-\epsilon,x+\epsilon)$. Because of this, the samples
drawn from the urn may converge on any ratio as the number of balls
drawn approaches infinity. More formally, the PE distribution converges
to the Beta distribution as the number of samples approaches infinity
\citep{Fe71}, thus the limiting probability of the ratio falling
within any open set is non-zero. 

For generation of random profiles, we replace colours with linear orders.
The parameter $a$ characterises homogeneity; for $a=0$ we obtain
the well known Impartial Culture assumption and for $a=1$ the 
Impartial Anonymous Culture assumption \citep{bergle94}. 
}

\begin{defn}
A {}``voting ratio'' is a function $f\colon\LL(\allalts)\rightarrow[0,1]$
such that\begin{align*}
\sum_{\vctr{v}\in\LL(\allalts)}f(\vctr{v}) & =1\textrm{.}\end{align*}
We say that a profile $\profile$ reduces to a voting ratio $f$ if\begin{align*}
\forall_{\vctr{v}\in\LL(\allalts)}\#\{i:\,\profile_{i}=\vctr{v}\}  = & nf(\vctr{v})\end{align*}

\end{defn}
\begin{note}
A voting ratio is similar to a voting situation, but unlike a voting
situation does not contain any information about the total number
of agents. 
\end{note}
\begin{defn}
We say that a voting ratio $f$ is {}``bad'' if for every profile
$\profile$ that reduces to $f$ and has an even number of agents,
the DQ winner of $\profile$ differs from the Dodgson winner.
\end{defn}
\iflong{
\begin{example}
The following voting ratio is bad.\begin{align*}
g(\vctr{v})  = & \left\{ \begin{array}{ccc}
\jnicefrac{7}{18}  \textrm{ if } & \vctr{v}=abcde\\
\jnicefrac{6}{18}  \textrm{ if } & \vctr{v}=cdabe\\
\jnicefrac{5}{18}  \textrm{ if } & \vctr{v}=bcead\\
0   & \textrm{otherwise}\end{array}\right.\end{align*}
For any profile with $18n$ agents that reduces to the above voting
ratio, the DQ and Dodgson score of $c$ will be $3n$; the DQ score
of $a$ will be $2n$ and the Dodgson score of $a$ will be $4n$.
Hence $a$ will be the DQ winner but $c$ will be the Dodgson winner.
\end{example}
}

\begin{example}\label{exa:h}
The following voting ratio is bad.\begin{align*}
h(\vctr{v})  = & \left\{ \begin{array}{ccc}
\jnicefrac{16}{39}  \textrm{ if } & \vctr{v}=abcx\\
\jnicefrac{12}{39}  \textrm{ if } & \vctr{v}=cxab\\
\jnicefrac{10}{39}  \textrm{ if } & \vctr{v}=bcxa\\
\jnicefrac{1}{39}  \textrm{ if } & \vctr{v}=cbax\\
0   & \textrm{otherwise}\end{array}\right.\end{align*}
We see that for a profile that reduces to the above voting ratio,
we have the following advantages and scores per 78 agents:
\end{example}
<<<<<<< .mine
\vspace{0.25cm}
=======
\vspace{0.5cm}
>>>>>>> .r176
\begin{center}
\begin{minipage}{5cm}\vspace{-3cm}\rotatebox{-90}{\includegraphics[scale=0.7]{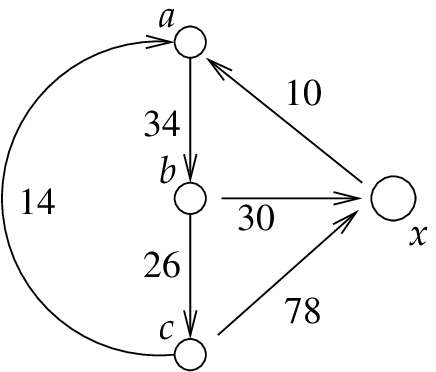}}\end{minipage}\raisebox{1.5cm}{\begin{tabular}{|c|c|c|c|c|}
\hline 
 & $a$ & $b$ & $c$ & $x$\tabularnewline
\hline
\hline 
DQ score & 12 & 17 & 13 & 54\tabularnewline
\hline 
Dodgson score & 14 & 17 & 13 & 54\tabularnewline
\hline
\end{tabular}}
<<<<<<< .mine
\vspace{-0.35cm}
=======
>>>>>>> .r176
\par\end{center}

The Dodgson score of $a$ is higher than the DQ score of $a$ as to
swap $a$ over $c$ we must first swap $a$ over $b$ or $a$ over
$x$. We have to swap $a$ over $c$ at least $7$ times. Hence we
must use a total of at least 14 swaps to make $a$ a Condorcet-tie
winner. We see that $a$ is the DQ winner, but $c$ is the Dodgson
winner.

We will now show that there exists a neighbourhood around the voting
ratios $g$ and $h$ that is bad.

\begin{lem}
Altering a single vote will change the Dodgson scores and DQ scores by at most $m-1$. 
\end{lem}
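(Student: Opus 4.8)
Altering a single vote changes the Dodgson score and the DQ score of each alternative by at most $m-1$.

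The plan is to bound the effect of a single vote change on each score separately, handling the DQ score by direct computation and the Dodgson score by a swap-construction argument. First I would treat the DQ score. Recall $\ScQ(x)=\sum_{y\ne x}\ceil{\adv(y,x)/2}$. Altering one vote replaces one linear order with another, so for each unordered pair $\{y,x\}$ the quantity $n_{yx}-n_{xy}$ changes by at most $2$ in absolute value (the removed vote and the added vote can each contribute a swing of $1$). Hence $\adv(y,x)$ changes by at most $2$, so $\ceil{\adv(y,x)/2}$ changes by at most $1$. Fixing the alternative $x$ whose score we track, the sum ranges over the $m-1$ other alternatives $y$, and each summand moves by at most $1$, giving $|\ScQ'(x)-\ScQ(x)|\le m-1$.

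For the Dodgson score the cleaner route is the swap-construction bound used already in Lemma~\ref{lem:inc-DD}. Let $P$ be the original profile and $P'$ the profile after altering one vote $\linord$ into $\altlinord$. Take an optimal set of neighbouring swaps witnessing $\ScD[P](x)$, turning $x$ into a Condorcet-tie winner in $P$. I would transfer this swap set to $P'$: the only pairwise margins that can have deteriorated are those involving alternatives that the changed vote reordered, and each such margin $\adv(y,x)$ dropped by at most the $2$ computed above. To repair $x$ against each of the (at most $m-1$) alternatives $y$ that now fall short, it suffices to swap $x$ upward past $y$ in the altered vote; since $\adv(y,x)$ fell by at most a small constant, at most one additional swap per such $y$ is needed inside the single altered vote, and in any case moving $x$ to the top of that one vote costs at most $m-1$ swaps total. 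Thus $\ScD[P'](x)\le\ScD[P](x)+(m-1)$. By the symmetric argument (exchanging the roles of $P$ and $P'$, since altering a vote is reversible) we also get $\ScD[P](x)\le\ScD[P'](x)+(m-1)$, so $|\ScD[P'](x)-\ScD[P](x)|\le m-1$.

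The main obstacle is the Dodgson half: I must argue carefully that repairing the transferred swap set against the alternatives whose margins slipped can be done within $m-1$ extra swaps, rather than naively re-solving. The key observation that makes this work is that a single altered vote can only help $x$ against some opponents while hurting it against others, and the total deficit incurred against the opponents it hurts is absorbed by moving $x$ upward within that one vote — a move costing at most $m-1$ neighbouring swaps because a vote ranks only $m$ alternatives. Using reversibility to get both directions of the inequality from one construction keeps the argument short and avoids a separate case analysis.
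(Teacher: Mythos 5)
Your proof is correct and follows essentially the same route as the paper's: the DQ bound by observing that each of the $m-1$ summands $\left\lceil \adv(y,x)/2\right\rceil$ moves by at most one, and the Dodgson bound by lifting the candidate to the top of the single altered vote at a cost of at most $m-1$ neighbouring swaps, with reversibility giving both directions of the inequality. The paper packages the Dodgson step slightly more cleanly --- it compares each of the two profiles to the common profile obtained by swapping $d$ to the top of the differing vote, which makes your per-opponent ``repair'' discussion (the somewhat shaky ``one additional swap per such $y$'' remark) unnecessary --- but your fallback ``move $x$ to the top of that one vote'' is exactly the paper's construction, so the argument goes through.
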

\begin{proof} Recall that
\begin{align*}
\ScQ(a)  = & \sum_{b\ne a}\fQ(b,a),\text{ where }\fQ(b,a)=\left\lceil \frac{\text{adv}(b,a)}{2}\right\rceil .\end{align*}
We see that for each other alternative $b$ changing a single vote can change $\fQ(b,a)$ by at most one, and there are $m-1$ such alternatives.

Say that $P$ and $R$ are two profiles that differ only in a single vote.
Let $P'$ and $R'$ be $P$ and $R$ respectively after some arbitrary $d$ has been swapped to the top of the vote that differs, which requires no more than $m-1$ swaps. We see that
\begin{align*}
\ScD[P](d) \leq \ScD[P'](d)+m-1 \leq \ScD[R](d)+m-1  & \\ 
\ScD[R](d) \leq \ScD[R'](d)+m-1 \leq \ScD[P](d)+m-1  & \textrm{ .}  \tag*{\qedhere}
\end{align*}  
\hspace{-1em}
\end{proof}

\begin{cor}
For any positive integer $k$, alternative $d$, profile $P$
and rule $X\in\{\RuleFont{D},\RuleFont{Q}\}$ (i.e. Dodgson or DQ),
 if $\ScX(d)<\ScX(a)-2k(m-1)$ for all other alternatives $a$,
then $d$ will remain the unique $X$ winner in any profile that results from changing $k$ or less votes.
\end{cor}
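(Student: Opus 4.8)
The plan is to lift the single-vote bound of the preceding lemma to a bound for $k$-vote changes, and then apply that bound simultaneously to the purported winner $d$ and to each rival alternative $a$. First I would observe that any profile $P'$ obtained from $P$ by changing $j\leq k$ votes can be reached by a chain $P=P_0,P_1,\ldots,P_j=P'$ in which each $P_{t+1}$ differs from $P_t$ in exactly one vote. Applying the preceding lemma at each of the $j$ steps and summing via the triangle inequality gives $|\ScX[P'](x)-\ScX[P](x)|\leq j(m-1)\leq k(m-1)$ for every alternative $x$ and every rule $X\in\{\RuleFont{D},\RuleFont{Q}\}$.

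Next I would use this two-sided bound on the only two alternatives that matter for a given comparison. For the winner $d$ we have $\ScX[P'](d)\leq\ScX[P](d)+k(m-1)$, while for any other alternative $a$ we have $\ScX[P'](a)\geq\ScX[P](a)-k(m-1)$. Chaining these two inequalities with the hypothesis $\ScX[P](d)<\ScX[P](a)-2k(m-1)$ yields $\ScX[P'](d)<\ScX[P](a)-k(m-1)\leq\ScX[P'](a)$, so $d$ strictly beats $a$ under rule $X$ in $P'$. Since $a$ was an arbitrary alternative distinct from $d$, this shows $d$ is the unique $X$ winner of $P'$.

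The substantive step is the first one: iterating the single-vote lemma and confirming that any change of at most $k$ votes decomposes into a sequence of single-vote changes, so that the per-step bound $m-1$ accumulates to at most $k(m-1)$. The only real care needed — and hence the main obstacle, such as it is — lies in the bookkeeping of the two-sided estimate, namely recognising that the worst case is when $d$'s score rises by the full $k(m-1)$ while a rival's falls by the full $k(m-1)$; this is exactly what forces the factor of $2$ in the hypothesis $\ScX(d)<\ScX(a)-2k(m-1)$. No further estimation is required, and the corollary follows at once from the iterated bound.
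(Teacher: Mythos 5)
Your proof is correct and is precisely the argument the paper intends: the corollary is stated without proof as an immediate consequence of the single-vote lemma, and your decomposition into $k$ single-vote changes plus the two-sided bound (score of $d$ up by at most $k(m-1)$, score of $a$ down by at most $k(m-1)$) is exactly what justifies the factor of $2$ in the hypothesis. Nothing is missing.
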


\begin{lem}
There is a neighbourhood of bad voting ratios around the voting ratio $h$ from Example \ref{exa:h}.
\end{lem}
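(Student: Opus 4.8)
The plan is to exhibit a $\delta>0$ such that every voting ratio $f$ with $\sum_{\linord}|f(\linord)-h(\linord)|<\delta$ is bad, by showing that for every such $f$ the alternative $a$ is the unique DQ winner while $c$ is the unique Dodgson winner, so that the two winners necessarily differ. The engine is the preceding corollary: a profile reducing to a ratio near $h$ is only a small perturbation of a profile reducing to $h$ itself, and the corollary guarantees that a sufficiently dominant unique winner survives such a perturbation.

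Concretely, I would fix an even $n$ and a profile $\profile$ reducing to $f$, and compare $\profile$ with the profile $P_0$ on the same $n$ agents whose vote multiplicities are the integers closest to $nh(\linord)$ (adjusted so they sum to $n$). Then $P_0$ reduces to a ratio within $\bigO(m!/n)$ of $h$, and the number of votes in which $\profile$ and $P_0$ differ is $k\le\tfrac{1}{2}n\sum_{\linord}|f(\linord)-h(\linord)|+\bigO(m!)$. Because $P_0$ is, up to this bounded rounding, the $(n/78)$-fold clone of the $78$-agent profile of Example~\ref{exa:h}, its DQ and Dodgson scores equal $(n/78)$ times the per-$78$ values listed there, up to an additive $\bigO(m!)$; for the Dodgson score one passes through the exactly linear $\ScC$ and invokes \prettyref{thm:ScR-bound}. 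Since the table gives $\ScQ(a)=12$ and $\ScD(c)=13$ as strict minima each with a gap of $1$ over the next alternative, in $P_0$ the alternative $a$ is the unique DQ winner and $c$ the unique Dodgson winner, each with a margin of at least $n/78-\bigO(m!)$.

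I would then apply the corollary twice, with $(X,d)=(\RuleFont{Q},a)$ and with $(X,d)=(\RuleFont{D},c)$: provided the margin $n/78-\bigO(m!)$ exceeds $2k(m-1)$, both winners persist when we pass from $P_0$ to $\profile$. Since $2k(m-1)\le(m-1)n\sum_{\linord}|f(\linord)-h(\linord)|+\bigO(m!)$, it suffices to take $\delta<1/(78(m-1))$: then the coefficient of $n$ is positive, and the required inequality holds for all $n$ beyond some threshold $N_0$ depending only on $m$ and $\delta$. For such $n$ the DQ winner of $\profile$ is $a$ and the Dodgson winner is $c$, so $f$ is bad.

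The main obstacle is the finitely many even $n\le N_0$, where the additive $\bigO(m!)$ error is not yet dominated by the linear margin and the clone-scaling estimate says nothing. I would dispose of these using the arithmetic of $h$: its nonzero values all have denominator $39$, so a ratio realisable with at most $N_0$ agents whose coordinates lie within a small enough $\delta$ of $h$ is forced to equal $h$ exactly. Indeed, a multiple $k/n$ of $1/n$ with $n\le N_0$ lies within $\delta<1/(39N_0)$ of $16/39$ only if $39\mid n$ and $k/n=16/39$, which pins every coordinate to its $h$-value. Shrinking $\delta$ accordingly, the only ratios near $h$ realisable with few agents are the clones of $h$ from Example~\ref{exa:h}, which are bad. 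Taking $\delta$ smaller than both $1/(78(m-1))$ and this arithmetic bound yields the desired neighbourhood.
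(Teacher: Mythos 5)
Your overall strategy is the same as the paper's: use the per-$78$-agent margin of $1$ from Example~\ref{exa:h} together with the preceding corollary (a unique $X$-winner ahead by more than $2k(m-1)$ survives $k$ vote changes), and conclude that any ratio within $\ell_1$-distance roughly $\nicefrac{1}{234}$ of $h$ is bad. Indeed your threshold $\delta<1/(78(m-1))$ is exactly the paper's ``alter fewer than $\nicefrac{n}{468}$ votes'' restated in $\ell_1$ terms. You are considerably more careful than the paper's two-sentence proof about two points it glosses over: that a profile reducing to a nearby ratio need not have $78\mid n$, so there may be no $n$-agent profile reducing exactly to $h$ to compare against (your rounded profile $P_0$), and that the resulting $\bigO(m!)$ additive slack is harmless for large $n$ and vacuous for small $n$ because denominators force $f=h$ there. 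Both refinements are correct and welcome.

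The one step that does not work as you have written it is the transfer of the Dodgson scores to the clone ``by passing through the exactly linear $\ScC$ and invoking \prettyref{thm:ScR-bound}.'' Theorem~\ref{thm:ScR-bound} only gives $\ScD(d)-(m-1)!(m-1)e<\ScC(d)\leq\ScD(d)$, and for $m=4$ the constant $(m-1)!(m-1)e=18e\approx 48.9$ vastly exceeds the per-$78$ margin of $1$ that the whole example turns on. Concretely, linearity of $\ScC$ plus the theorem yields only $\ScD[P_0](a)\geq\tfrac{n}{78}\,\ScC[\profile_{78}](a)>\tfrac{n}{78}(14-18e)-\bigO(1)$, which is negative and in particular does not rule out $a$ overtaking $c$ (whose Dodgson score is $13\cdot\tfrac{n}{78}+\bigO(1)$ from the trivial upper bound) in the cloned profile. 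So the error in your score estimate is not additive $\bigO(m!)$ by this route; it is $\bigO(n)$. The fix is available but must be said: one needs $\ScC[\profile_{78}](a)=14$ \emph{exactly}, which follows from the argument already given in Example~\ref{exa:h} --- every unit of advantage gained over $c$ costs at least two swap-units because $c$ is never adjacent to $a$ from above, and this lower bound is valid for fractional solutions of the LP as well --- together with $\ScD\geq\ScQ$ for the alternatives $b$ and $x$. Equivalently, you may simply read the example's ``scores per 78 agents'' as asserting exact linear scaling for profiles reducing to $h$, which is what the paper does. With that repair your proof is complete; without it, the central inequality $\ScD(a)>\ScD(c)$ in $P_0$ is unsupported.
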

\begin{proof}
We see that in a profile with $n$ agents that reduces to the voting ratio
$h$ the Dodgson and DQ winners have scores that are at least $\frac{n}{78}$ lower than the other alternatives. Thus if we alter less than $\frac{n}{78}\frac{1}{2(1)(4-1)}=\frac{n}{468}$ votes, the DQ winner will remain different from the Dodgson winner.
\end{proof}

\ifpolya{
\begin{thm}
If we generate profiles randomly according to the PE distribution,
with $a>0$ and  $abcx$, $cxab$, $bcxa$, $cbad$ existing in the
urn, the probability that the DQ winner is not the Dodgson winner
will converge to some non-zero value as the number of agents tends
to infinity.
\end{thm}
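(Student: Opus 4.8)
The plan is to read the preceding two results together: the lemma just established hands us an open region of bad voting ratios, and the discussion of the PE model hands us that any such region is charged by the limiting distribution. First I would fix, using that lemma, an open neighbourhood $U$ of the voting ratio $h$ from Example~\ref{exa:h} consisting entirely of bad voting ratios; concretely $U$ is an $\ell_{1}$-ball around $h$ of some fixed radius, independent of $n$. Note that $h$ is supported on exactly the four orders $abcx$, $cxab$, $bcxa$, $cbax$ named in the hypothesis, with all four weights strictly positive, so $h$ lies in the relative interior of the simplex spanned by the orders present in the urn.

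Next I would appeal to the PE facts recalled above. Since $a>0$, the empirical voting ratio $f_{\profile}$ of a random profile $\profile$ on $n$ agents converges in distribution, as $n\to\infty$, to a Dirichlet (multivariate Beta) law supported on that simplex, and the limiting probability of landing in any open set that meets the support is strictly positive. Because $U$ is an open neighbourhood of the relative-interior point $h$, this yields a constant $c>0$ with
\begin{align*}
\lim_{n\to\infty}\Pr\!\left[\,f_{\profile}\in U\,\right]=c.
\end{align*}

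Finally I would close the loop through the definition of \emph{bad}. Every profile reduces to its own empirical voting ratio, so if $\profile$ has an even number of agents and $f_{\profile}\in U$, then $f_{\profile}$ is bad and hence the DQ winner of $\profile$ differs from its Dodgson winner. Restricting $n$ to even values then gives
\begin{align*}
\Pr\!\left[\text{DQ winner}\neq\text{Dodgson winner}\right]\ \geq\ \Pr\!\left[\,f_{\profile}\in U\,\right]\ \longrightarrow\ c>0,
\end{align*}
so the disagreement probability stays bounded away from $0$ and converges to a non-zero value rather than to the ``correct'' answer of agreement.

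The main obstacle is the transfer from this limiting, ratio-level statement to a genuine positive fraction of finite profiles, and two points need care. The definition of bad controls only profiles with an even number of agents, so the clean argument runs along the subsequence of even $n$; this already establishes non-convergence and is the natural reading of the theorem. Second, the displayed inequality gives only $\liminf\geq c$; to upgrade it to an honest limit I would use that $f_{\profile}$ converges in distribution to a law with a density on its support, whereas the set of ratios on which the two winners disagree is cut out by finitely many (piecewise) linear score-inequalities, whose boundary is a finite union of hyperplane pieces and hence has Dirichlet-measure zero, so the portmanteau theorem delivers convergence of the probability itself.
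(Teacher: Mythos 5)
Your proposal follows essentially the same route as the paper's own (much terser) proof: fix the neighbourhood of bad voting ratios around $h$, use the convergence of the PE urn to a Beta/Dirichlet limit placing positive mass on any open set meeting the support, and conclude via the definition of bad that the disagreement probability is bounded away from zero. The additional care you take over the even-$n$ restriction and the upgrade from $\liminf$ to a genuine limit addresses details the paper leaves implicit, but does not change the argument.
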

\begin{proof}
The PE distribution converges to the Beta distribution. Given any
neighbourhood, the probability that a variable distributed according
to a non-trivial beta distribution will fall within that neighbourhood
is non-zero. There is a neighbourhood of bad voting ratios and any
profile that reduces to a bad voting ratio has a DQ winner that is
different to the Dodgson winner.
\end{proof}

We can generalise this result
by adding alternatives that lose to all of $a$, $b$, $c$ and $x$.
As the Impartial Anonymous Culture assumption is a special case of
the PE distribution with $a=1$ and the urn starting with an equal
number of all linear orders, these approximations also do not converge
under the IAC assumption.} Thus we may now state the following theorem:

\begin{thm}
If we generate profiles randomly according to the IAC distribution,
with $m\geq4$, the DQ, Greedy and Tideman winners do not converge
to the Dodgson winner as the number of agents tends to infinity.
\end{thm}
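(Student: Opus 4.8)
The plan is to obtain this theorem as a specialisation of the P\'olya--Eggenberger (PE) result proved above, combined with the earlier observation that, under IAC, the Greedy and Tideman winners agree with the DQ winner with probability tending to one. Recall that IAC is exactly the PE distribution with $a=1$ and an urn starting with one ball for each of the $m!$ linear orders. In particular, for $m=4$ the urn already contains the four orders $abcx$, $cxab$, $bcxa$ and $cbax$ that support the bad voting ratio $h$ of Example~\ref{exa:h}, so the PE theorem applies directly: since there is an open neighbourhood of bad voting ratios around $h$, and since the PE distribution with $a>0$ converges to a limiting (Dirichlet/Beta) law charging every open set with positive probability, the probability that the DQ winner differs from the Dodgson winner tends to a strictly positive value.

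For $m>4$ I would extend $h$ by placing the $m-4$ extra alternatives at the bottom of every order in the support of $h$. These padding alternatives lose every pairwise contest to $a,b,c,x$, so their Dodgson and DQ scores dwarf those of $a,b,c,x$ and they can never win; moreover, since an optimal sequence of swaps never moves the target alternative below its current rank, appending bottom-ranked alternatives leaves the Dodgson and DQ scores of $a,b,c,x$ unchanged. Hence the padded ratio is still bad, and the neighbourhood lemma again supplies an open ball of bad ratios, now inside the $m!$-dimensional simplex. The IAC urn for $m$ alternatives contains all the required orders, so the same limiting argument yields a positive limiting probability that DQ and Dodgson disagree, establishing DQ non-convergence for every $m\geq4$.

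It then remains to transfer this failure to the Greedy and Tideman rules. The earlier analysis showed that, under IAC, the probability that the Greedy winner differs from the DQ winner tends to $0$ (the GreedyScore differs from the DQ score by less than $m$), and likewise the probability that the Tideman and DQ winners differ tends to $0$. Writing $c>0$ for the limiting probability that DQ and Dodgson disagree, a union bound gives
\[
P(\text{Greedy}\neq\text{Dodgson})\;\geq\;P(\text{DQ}\neq\text{Dodgson})-P(\text{Greedy}\neq\text{DQ})\longrightarrow c>0,
\]
and identically with Tideman in place of Greedy. Thus none of the DQ, Greedy and Tideman winners converges to the Dodgson winner.

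I expect the main obstacle to be the padding step for $m>4$: one must argue carefully that appending bottom-ranked alternatives neither creates a new Dodgson or DQ winner nor perturbs the scores of $a,b,c,x$, and that a genuinely open neighbourhood of bad ratios survives in the enlarged simplex, so that the limiting law (uniform on the simplex when $a=1$) still assigns it positive mass. The remaining pieces are bookkeeping, since both the bad-neighbourhood lemma and the convergence of Greedy and Tideman to DQ are already in hand.
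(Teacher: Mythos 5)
Your proposal is correct and follows essentially the same route as the paper: specialising the P\'olya--Eggenberger non-convergence theorem to IAC (the $a=1$ case), handling $m>4$ by adding alternatives that lose to all of $a$, $b$, $c$ and $x$, and transferring the failure to the Greedy and Tideman rules via their convergence to DQ under IAC. You actually spell out more detail than the paper does (the padding argument and the union bound are left implicit there), so no discrepancy to report.
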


\ifnotpolya{It is easy to generalise this result to non-trivial
P\'olya-Eggenberger distributions. See \citep{Da06} for details.}

Under the IAC, the Tideman rule \citep{DaPrSl07} and
the greedy algorithm proposed by \citet{homan-2005-} converges to the DQ rule, which does not converge to Dodgson's rule. It follows that the Tideman rule and greedy algorithm do not converge to Dodgson's rule under the IAC.
As the difference between the DQ scores and the \texttt{GreedyScores}
\citep{homan-2005-} is less than $m$, the DQ winner and \texttt{GreedyWinner}
<<<<<<< .mine
will converge under \iflong{the} IAC.
=======
will converge under the IAC.
>>>>>>> .r176

\section{Conclusion}

We have previously \citep{DaPrSl07} proved that Tideman's rule does converge under IC, and presented a refined rule (DQ) which converged exponentially quickly. Another approximation with exponentially fast
convergence was independently proposed by \citet{homan-2005-}.

Unfortunately, real voters are not independent, and so the Impartial
Culture assumption is not realistic. This paper investigates the asymptotic
behaviour of approximations to the Dodgson rule under other assumptions
of voting behaviour, particularly the Impartial Anonymous Culture
(IAC) assumption, that each multiset of votes is equally likely.

We found that the approximations converge to each other under IAC,
but they do not converge to Dodgson. Hence the DQ rule is not asymptotically
closer to the Dodgson rule than the Tideman rule is, although DQ converges
faster under the IC. 

It is not realistic to assume that voters' behaviour will precisely
follow any mathematical model, including IAC\@. Fortunately the proof
that our new approximations converge to Dodgson does not depend on
the details of the IAC\@. Indeed, the DC, DR, D\& and Dodgson winners
will all be the same if the  scores of the two leading alternatives
differ by less than $(m-1)!(m-1)e$. This assumption is realistic for a sufficiently
large number of voters. Even in the neck and neck 2000 US presidential
elections, the popular vote for the leading two alternatives differed
by half a percent --- over half a million votes. If we used the Dodgson
rule to choose between the top five alternatives, the difference between
the Dodgson scores of the leading two alternatives would have to be
less than 261 for the winners to differ. Even if the entire electorate
conspired to cause the winners to differ, minor inaccuracies such
as hanging chads could frustrate their attempt. Even in cases where
the Dodgson Relaxed does differ from the Dodgson, the difference seems
to be primarily that the Dodgson Relaxed rule picks a smaller set
of tied winners as it is able to split ties in favour of alternatives
that have fractionally better DR scores. This is in some sense more
democratic than tie breaking procedures such as breaking ties in favour
of the preferences of the first voter.

Thus determining that an algorithm is {}``frequently self-knowingly
correct'', as defined by \citet{homan-2005-}, is insufficient to
conclude that the algorithm will converge in practice. The DQ and
\texttt{GreedyWinner} provide frequently self-knowingly correct algorithms, but do
not converge under other assumptions of voter behaviour such as IAC.
We have extended the definition to allow a distribution to be specified.
So, in other words, DQ and \texttt{GreedyWinner} provide algorithms that are {}``frequently
self-knowingly correct over IC'', but unlike DC, DR, and
D\&, do not provide algorithms that are ``frequently self-knowingly correct algorithms over 
IAC''.

We have previously shown that the Dodgson scores and winners of a
voting situation can be computed from a voting situation with $\bigO(f_{1}(m)\ln n)$
arithmetic operations of $\bigO(f_{2}(m)\ln n)$ bits of precision
\citep{Da06} for some pair of functions $f_{1}$ and $f_{2}$.
However we did not find a good upper bound on $f_{1}$ or $f_{2}$,
even $f_{1}(4)$ may be unreasonably large. \iflong{In contrast, we found
that we may compute the DC, DR and D\& score with $\bigO\left[\left(\left(m-1\right)!\right)^{4}\ln\left(\left(m-1\right)!n\right)\right]$
operations of\begin{align*}
\bigO\left[\left(m-1\right)!\ln\left(\left(m-1\right)!n\right)\right]\end{align*}
 bits of precision. Although $\left(\left(m-1\right)!\right)^{4}$
is quite large, a modern computer can perform billions of elementary
operations a second. This result suggests that even the worst case
time for billions of voters and 6 or 7 alternatives is feasible. Also,
whereas \texttt{DodgsonScore} is NP-complete \citep{batotr89}, we can also compute the
DR and D\& scores using $\bigO(m^{4}n^{4}\ln(mn))$ arithmetic operations
requiring $\bigO(mn\ln(mn))$ bits of precision. } 
\ifshort{This suggests that it may be important to use a variant of Dodgson's rule that is truely polynomial, such as DC, DR or D\&. See \citep{Da06} for a discussion of the complexity of these rules.}%

We have found that the scores of the most of the approximations we
have studied form a hierarchy of increasingly tight lower bounds on
the Dodgson score:\begin{align*}
\frac{\ScS(x)}{2} & \leq\frac{\ScT(x)}{2}\leq\ScQ\leq\ScR\leq\ScRR\leq\ScD\leq\ScR+(m-1)!(m-1)e\end{align*}
The Dodgson Clone rule does not fit in that hierarchy, although it is the case that 
\begin{align*}
\frac{\ScT(x)}{2}\leq\ScC\leq\ScR\leq\ScD\leq\ScC+(m-1)!(m-1)e \textrm{ .} &
\end{align*}
Despite the great accuracy of the D\& approximation, there are good
reasons to pick other approximations. The difference between the DR
rule and the D\& rule is that the DR rule can split ties based on
fractional scores, so the DR rule may be considered superior to D\&
and Dodgson's rule. The DC rule is resistant to cloning of the electorate.
The DQ rule is very simple to compute, and very
easy to write in any programming language. As the DQ rule is known
to converge exponentially fast under IC, this makes the DQ rule very
appropriate for cases where the data is known to distributed according
to IC\@. This is the case for studies that have randomly generated
data according to the IC \citep[see e.g.][]{explore,shah1,nu83}.
The Tideman rule is no more easy to compute than the DQ rule. However,
the mathematical definition of the Tideman rule is simpler than the
DQ rule. This makes the Tideman rule useful for theoretical studies
of the Dodgson rule where the speed of convergence is not important.
Also, like the DC rule, the Tideman rule is resistant to cloning the
electorate.

We conclude that the DC and DR rules are superior,
for social choice, to the traditional definition of the Dodgson rule. 
DC provies resistance to cloning the electorate, and both are better at splitting ties than the traditional definition of Dodgson rule.
We know of no advantage of the traditional definition over these rules.
Even if the traditional Dodgson winner is preferred, it may be hard to justify the computational complexity
of the traditional Dodgson rule, especially since it is almost certain
that these rules would pick the same result. If, despite all this,
the traditional Dodgson rule is still chosen, these new rules provide
frequently self-knowingly correct algorithms for the Dodgson winner
for any reasonable assumption of voter behaviour, including IAC.

\bibliographystyle{johnnat}
\bibliography{mybibtex}

\begin{contact}
John Christopher M\textsuperscript{c}Cabe-Dansted\\
M002 The University of Western Australia\\
35 Stirling Highway, Crawley 6009 \\
Western Australia\\
\email{john@csse.uwa.edu.au}
\end{contact}

\end{document}